\definecolor{Red}{cmyk}{0,1,1,0}
\definecolor{Blue}{cmyk}{1,1,0,0}
\theoremstyle{plain}
\newtheorem{theorem}{Theorem}[section]
\newtheorem{corollary}[theorem]{Corollary}
\newtheorem{proposition}[theorem]{Proposition}
\newtheorem{conjecture}[theorem]{Conjecture}
\newtheorem*{theorem-main}{Main Results}
\theoremstyle{definition}
\newtheorem{definition}[theorem]{Definition}
\newtheorem{remark}[theorem]{Remark}
\newtheorem{example}[theorem]{Example}
\title[{T.F.} for Topological Markov chains]
{Thermodynamic Formalism for Topological Markov Chains on Standard Borel Spaces}
\author[L. Cioletti]{L. Cioletti}
\address{Departamento de Matemática, Universidade de Bras\'ilia, 70910-900, Bras\'ilia, Brazil}
\email{cioletti@mat.unb.br} 
\author[E. A. Silva]{E. A. Silva}
\address{Departamento de Matemática, Universidade de Bras\'ilia, 70910-900, Bras\'ilia, Brazil}
\email{e.a.silva@mat.unb.br} 
\author[M. Stadlbauer]{M. Stadlbauer}
\address{Departamento de Matem\'atica, Universidade Federal do Rio de Janeiro, 21941-909 \phantom{Rio}Rio de Janeiro (RJ), Brazil.}
\email{manuel@im.ufrj.br}
\thanks{M. Stadlbauer is supported by FAPERJ and CNPq and L. Cioletti is supported by CNPq}
\subjclass[2010]{37D35, 28Dxx}
\keywords{Thermodynamic Formalism, Topological Markov chains, Equilibrium States, Perron-Frobenius-Ruelle Theorem}
\date{}
\begin{document}

\vspace{-7pt}

%

\maketitle
	
\begin{quote}
	\footnotesize
	\textsc{Abstract.} 
	We develop a Thermodynamic Formalism for 
	bounded continuous potentials defined on the sequence space 
	$X\equiv E^{\mathbb{N}}$, where $E$ is a general standard Borel space. 
	In particular, we introduce meaningful concepts of entropy 
	and pressure for shifts acting on $X$ and obtain the existence 
	of equilibrium states as finitely additive probability measures for any bounded continuous potential. 
	Furthermore, we establish convexity and other structural properties 
	of the set of equilibrium states, prove a version of the 
	Perron-Frobenius-Ruelle theorem under additional assumptions 
	on the regularity of the potential
	and show that the Yosida-Hewitt decomposition of these
	equilibrium states does not have a purely finite additive part. 
	
	We then apply our results to the construction of invariant measures
	of time-homogeneous Markov chains taking values on a general Borel
	standard space and obtain exponential asymptotic stability for a class of
	Markov operators. We also construct conformal measures 
	for an infinite collection of interacting random paths which are
	associated to a potential depending on infinitely many coordinates.
	Under an additional differentiability hypothesis, we show how this 
	process is related after a proper scaling limit to a 
	certain infinite-dimensional diffusion.
\end{quote}
\vspace{21pt}

\section{introduction}

One of the principal motivations of Ergodic Theory is to 
understand the statistical behavior of a 
deterministic dynamical system $T:X\to X$ by studying 
invariant probability measures of the system. 
In this context, ergodic theorems provide quantitative information 
on the asymptotic behavior of typical orbits of $T$.
However, if $T$ is continuous, $X$ is compact and the dynamical system 
has some sort of mixing behavior, then there  exists a  
plethora of these invariant measures. In these cases, the 
theory of Thermodynamic Formalism is nowadays a 
recognized method for making a canonical choice of an invariant measure. 
That is, one fixes a continuous potential $f:X\to\mathbb{R}$ 
which encodes some qualitative behaviour of the system 
and considers those invariant probability measures, 
the so-called \textit{equilibrium states}, 
which satisfy a certain variational problem with respect this 
potential and which exist by compactness.  
However, if $X$ is not a compact space, additional  
hypotheses on $f$ are required in order to ensure the existence of such
canonical ergodic probability measures.

For example, if $X$ is a shift-invariant, closed subset of 
 $ {E}^{\mathbb{N}}$, where $E$
is an infinite countable set, the existence of 
equilibrium states is non-trivial and 
has been intensively investigated, due to its applications 
to the Gauss map, to partially hyperbolic dynamical systems 
and to unbounded spin systems in Statistical Mechanics 
on  one-dimensional one-sided lattices.  
From the viewpoint of abstract Thermodynamic Formalism, 
Mauldin \& Urbanski, Sarig and many others 
developed a rather complete theory for potentials on $X\subset {E}^{\mathbb{N}}$, where $E$
is an infinite countable alphabet (
see, for example, 
\cite{MR2018604,MR2551790,MR1738951,MR1822107,MR1818392,MR1955261,sarig2009lecture,MauldinUrbanski:1999,MauldinUrbanski:2001,MauldinUrbanski:2003}).
From the viewpoint of Statistical Mechanics, there is also a vast literature about
unbounded spins systems, they could be either the set of integer numbers $\mathbb{Z}$ or 
continuous $\mathbb{R}$, but usually the interactions are unbounded as in SOS, discrete Gaussian, $\Phi^4$ models and so on.
See, for example, 
\cite{MR1704666,MR0471115,MR735977,MR2807681,MR3025428,MR0446251,MR3735628,MR1370101,MR1115800}.
Even though in all these references, the concepts of pressure, entropy and thermodynamic 
limit play a major role, we do not yet have a unified framework which
relates these concepts across areas. 
For example, the potentials considered in the 
Thermodynamic Formalism literature usually
depend on infinitely many coordinates (which can be seen as infinite-body interactions) and satisfy  
suitable regularity conditions, and the alphabet is countable. 
On the other hand, in the Statistical Mechanics literature, the 
potential is typically less regular, infinite-range, the potential might be translation invariant or not, some times quasi-periodic potentials are considered  and the spins might take values in an uncountable set (or uncountable alphabet), 
but are usually defined in terms of finite-body interactions.  
The theory developed in this article now allows to consider potentials given by infinite-body interactions, uncountable alphabets 
and general bounded and continuous potentials. 
These three theories are related as shown
in the following diagram. We shall remark that no proper inclusion on the diagram 
below is possible. 

\begin{figure}[h]
	\centering
	\includegraphics[width=0.65\linewidth]{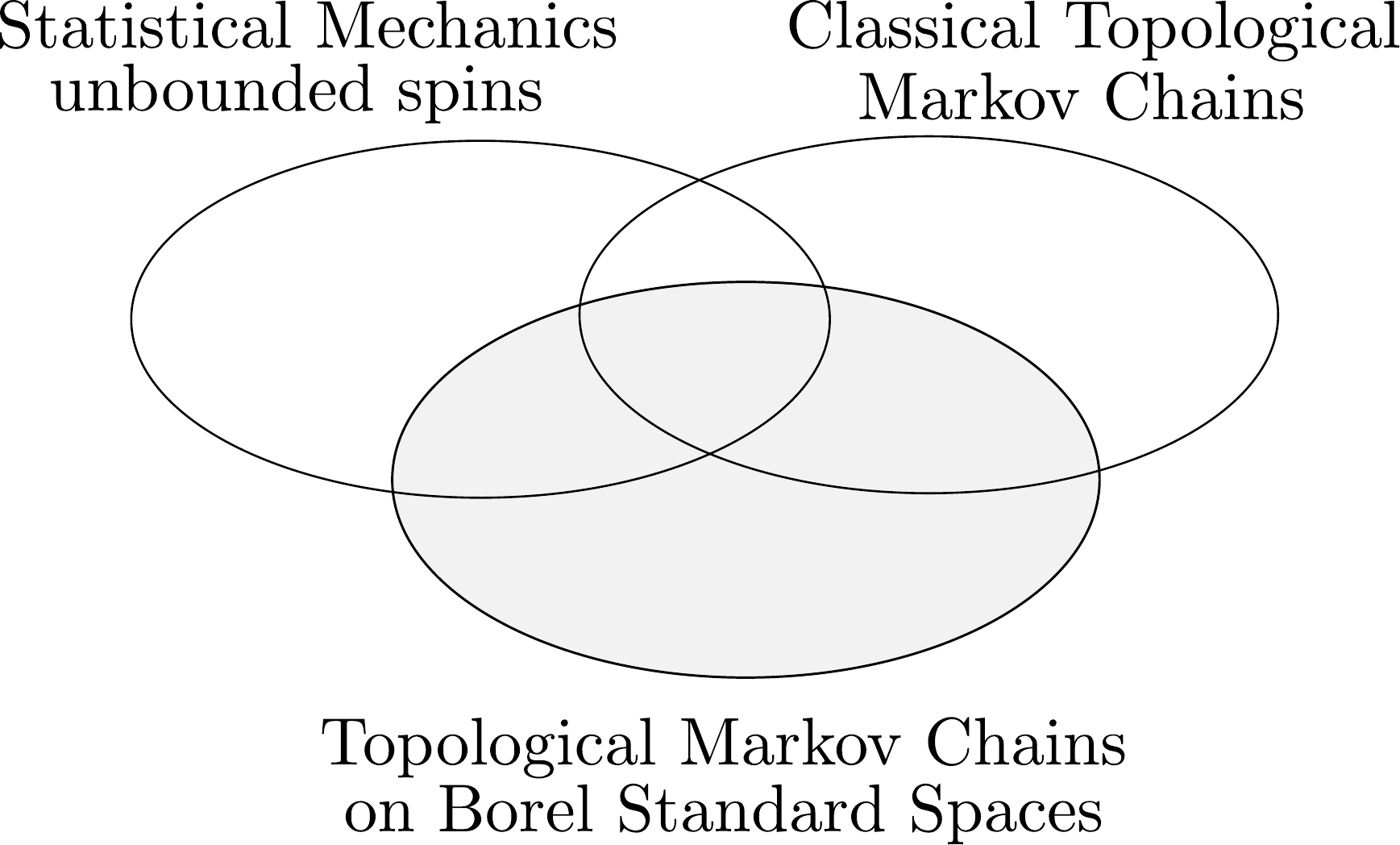}
	\caption[Comparison]{Relations among the three theories.}
	\label{fig:diagrama-3t}
\end{figure}

The classical Thermodynamic Formalism has its starting point in the seminal work by 
David Ruelle \cite{MR0234697} on the lattice gas model and was subsequently developed 
for subshifts of finite type, which are subsets of  
$M^{\mathbb{N}}$ with $M=\{1,\ldots, k\}$  (see, for example, 
\cite{MR1793194,MR2423393,MR1085356,MR2129258}) and are nowadays well-known 
tools in the context of hyperbolic dynamical systems. 
By considering a notion of pressure based on local returns, the Gurevic pressure, 
Sarig was able to extend the principles of Thermodynamic Formalism 
in \cite{MR1738951} to countable alphabets and obtained, among other things, 
a classification of the underlying dynamics into positively recurrent, 
null recurrent or transient behaviour through convergence of the transfer 
operator in \cite{MR1818392} or a proof of Katok's conjecture on the 
growth of periodic points of surface diffeomorphisms (\cite{Sarig:2013}).

However, from the viewpoint of Statistical Mechanics, it is also of interest to consider shift spaces with  
a compact metric alphabet which was done, for example, in 
\cite{ACR17,MR2864625,CL-rcontinuas-2016,MR3538412,CER17,MR3377291,MR3194082,2017arXiv170709072S}. In  \cite{MR2864625}, 
a Ruelle operator formalism was developed for the alphabet 
$M=\mathbb{S}^1$ and extended to general compact metric alphabets in \cite{MR3377291}.
As uncountable alphabets do not fit in the classical theory, as the number of preimages 
under the shift map is uncountable, the authors considered an \textit{a priori} measure $\mu$
defined on $M$ which allows to define a generalized Ruelle operator and prove a Perron-Frobenius-Ruelle Theorem.  
We would like to point out that the use of an \textit{a priori} measure is a standard procedure in 
Equilibrium Statistical Mechanics in order to deal with continuous spin systems, 
see \cite{MR2807681,MR1241537}, and, in combination with the given potential 
function, is also closely related to the notion of a transition kernel from probability theory. 

In this setting, it is necessary to propose new concepts of 
entropy, the so-called \emph{variational entropy}, and pressure. 
So an \emph{equilibrium state} for a continuous potential 
$f$ is an element of $\mathscr{M}_{\sigma}(X)$, the
set of all shift-invariant Borel probability measures, such that this 
measure realizes the supremum 
\[
\sup_{\mu\in \mathscr{M}_{\sigma}(X)}\{\mathtt{h}^{\mathtt{v}}(\mu)+ \langle\mu,f\rangle\},
\]
where $\mathtt{h}^{\mathtt{v}}(\mu)$ is the variational entropy 
of $\mu$ as introduced in \cite{MR3377291}. The associated 
variational principle was obtained in \cite{MR3377291} and the 
uniqueness of the equilibrium state in the class of Walters potentials in \cite{ACR17}. 
In there, the authors also  showed that 
the variational entropy defined in \cite{MR3377291} equals the 
\emph{specific entropy} commonly used in  
Statistical Mechanics (see \cite{MR2807681}). 
As a corollary, a variational formulation for the specific 
entropy is derived. 
It is also worth noting that several results for countable 
alphabets can be recovered by choosing a suitable a priori measure on the 
one-point compactification of $\mathbb{N}$ (see \cite{MR3377291}) 
and that the concepts of Gibbs measures and equilibrium states are 
equivalent if one considers potentials which are H\"older continuous 
or in Walters' class (\cite{BFV-ETDS-2018,MR3690296,CL-rcontinuas-2016,MR2861437}).

The aim of this article is to develop a Thermodynamic Formalism for
continuous and bounded potentials and alphabets which are standard Borel spaces. 
In this very general setting, one has 
to consider \textit{ergodic finitely additive probability measures} 
instead of ergodic probability measures as it will turn out in 
Theorem \ref{teo-exist-equi} and Corollary \ref{cor:unique-additive-measure} 
below that the following holds.

\medskip 
\noindent\textbf{Main Results}.
\textbf{(Equilibrium States)}. Let $f$ be a bounded and continuous potential. 
Then there exists a shift invariant and finitely additive 
measure which attains the supremum
\[\sup_{\mu\in \mathscr{M}^{a}_{\sigma}(X)} \mathrm{h}^{\mathrm{v}}(\mu) +\langle\mu,f\rangle.\] 
\textbf{(Ergodic Optimization)}. Let $E$ be a non-compact space, then there exists a bounded and continuous 
potential $f$, having a unique maximizing measure 
\[
m(f)
=
\sup_{\mu \in \mathscr{M}_\sigma^a(X)} \langle\mu,f\rangle
\] 
which is finitely but not necessarily  countably additive.  
\medskip 

Although finitely additive measures lead to a very abstract setting, we shall mention that 
these objects have been for a long time important mathematical objects in several
branches of pure and applied Mathematics, 
and naturally occur, for example, in the Fundamental Theorem of Asset Pricing
under the absence of arbitrages of the first kind (\cite{MR2732838}).

\bigskip 

This paper is organized as follows.
In Section \ref{sec-preliminares} we introduce the basic notation and 
recall the definition of the space $rba(X)$ as well as some of its basic properties. 
After that, the Ruelle operator acting on $C_{b}(X,\mathbb{R})$ 
is introduced, where $X = E^\mathbb{N}$
is a cartesian product of a general standard Borel space $E$.
In Section \ref{sec-RPF} we prove a Perron-Frobenius-Ruelle (PFR) theorem for bounded
H\"older potentials defined on $X$ and obtain a Central Limit Theorem as a corollary. 
Thereafter, we use 
PFR theorem to motivate the definition of the entropy and pressure. 
This leads to a natural definition of an equilibrium state as an element 
of $rba(X)$. We prove its existence for general bounded continuous 
potentials and also show that the supremum in the variational problem 
is attained by some shift-invariant regular finitely additive Borel probability 
measure. As a complement, it is proven that the set of equilibrium states is convex and 
compact and that bounded H\"older potentials admit equilibrium states whose 
Yosida-Hewitt decomposition does not have a purely finitely additive part.  
In Section \ref{sec:extreme_positive}, we then prove a characterization 
of the extremal mesures in order to obtain the second part of our  main theorem. 
Thereafter, in Section \ref{sec:applications}, the above 
Perron-Frobenius-Ruelle theorem is applied in the context of ergodic optimization 
and asymptotic stability of stochastic processes, and,  
we show in part \ref{sec-applications} how to use this 
theorem in order to construct an equilibrium state for 
infinite interacting random paths subject to an infinite-range potential.
We briefly discuss how their scaling limits are connected 
to some diffusions in infinite dimensions.

\section{Preliminaries}\label{sec-preliminares}

A measurable space $(E,\mathscr{E})$ is a standard Borel space 
if there exists a metric $d_E$ such that $(E,d_E)$ is a complete
separable metric space and $\mathscr{E}$ is the 
Borel sigma-algebra. 
Good examples to have in mind in order to compare our results with 
the classical ones in the literature are  
a finite set $\{1,\ldots, d\}$, the set of positive integers $\mathbb{N}$, a 
compact metric space $K$ or
the Euclidean space $\mathbb{R}^d$.
Throughout this paper, $X$ denotes the product space $E^{\mathbb{N}}$ and 
$\sigma:X\to X$, $(x_1,x_2,\ldots) \mapsto (x_2,x_3,\ldots)$ is the left shift. 
The space $X$ is regarded as a
metric space with  metric 
\[
d_{X}(x,y)
=
\sum_{n=1}^{\infty} \frac{1}{2^n}\min\{d_{E}(x_n,y_n),1\}.
\]
As easily can be verified, $X$ is always a bounded, complete and 
separable metric space, even though it may not be compact.
Furthermore, we refer to $C_{b}(X,\mathbb{R})$ as the Banach space of all 
real-valued bounded continuous functions endowed with its standard supremum norm.

\bigskip 

A Borel finitely additive signed measure on a topological space $X=(X,\tau)$
is an extended real valued set-function 
$\mu:\mathscr{B}(\tau)\to \mathbb{R}\cup\{-\infty,+\infty\}$ 
which satisfies
\textit{(i)} $\mu$ assumes at most one of the values $-\infty$ and $\infty$, 
\textit{(ii)} $\mu(\emptyset) = 0$,
\textit{(iii)} for each finite family $\{A_1,\ldots,A_n\}$
of pairwise disjoint sets in $\mathscr{B}(\tau)$, we have  
$\mu(A_1\cup\ldots\cup A_n)=\mu(A_1)+\ldots+\mu(A_n)$.
If $\sup_{A\in\mathscr{B}(\tau)}|\mu(A)|<+\infty$ for all $A\in \mathscr{B}(\tau)$, then
we say that $\mu$ is bounded. A Borel finitely additive signed measure $\mu$ is called regular
if for any $A\in \mathscr{B}(\tau)$ and $\varepsilon>0$, 
there exists a closed set $F\subset A$ and an open set $O\supset A$ 
such that for all Borel sets $C\subset O\setminus F$ we have 
$|\mu(C)|<\varepsilon$.
The total variation norm of 
a Borel finitely additive signed measure $\mu$ 
is defined by 
\[
\|\mu\|_{TV}\equiv 
\sup
\left\{
\sum_{k=1}^n |\mu(A_k)|: \ \{A_1,\ldots, A_n\}\subset\mathscr{B}(\tau)\ \text{is a partition of}\ X
\right\}.
\] 
It is known that the space of all regular bounded Borel finitely additive signed measures on a 
topological space $X$ endowed with the total variation norm is a Banach space and that,
since $X$ is a metric space,
the topological dual $C_{b}(X,\mathbb{R})^{*}$ is isometrically isomorphic 
to $(rba(X),\|\cdot\|_{TV})$ (see IV - Th. 6 in \cite{MR0117523} or
Th. 14.9 in \cite{MR2378491}). By \cite[p. 261]{MR0117523},
every $f\in C_{b}(X,\mathbb{R})$ is integrable with respect to every
$\mu\in rba(X)$, and its integral will be denoted by 
either $\mu(f)$, $\int_{X}f\, d\mu$ or $\langle \mu,f \rangle$. 
A countably additive Borel measure is an element $\mu\in rba(X)$ which is both
countably additive and non-negative, that is, 
$\mu(A)\geq 0$ for all $A\in \mathscr{B}(\tau)$.
If, in addition, $\mu(X)=1$ then $\mu$ is called a countably additive Borel 
probability measure, and we will make use of $\mathscr{M}_1(X)$ for the subset of $rba(X)$ of all 
countably additive Borel probability measures.
Furthermore, a regular finitely additive bounded Borel signed measure is said to be
shift-invariant if $\mu(f)=\mu(f\circ \sigma)$ for all $f\in C_{b}(X,\mathbb{R})$.

\bigskip 

In this paper, a generalized version of the Ruelle transfer operator 
will play a major role. Therefore, we first fix a 
Borel probability measure $p$ on $E$ and a potential $f\in C_{b}(X,\mathbb{R})$. 
The Ruelle operator is  defined as the positive linear operator 
$\mathscr{L}_{f}:C_{b}(X,\mathbb{R})\to C_{b}(X,\mathbb{R})$ 
sending $\varphi\longmapsto \mathscr{L}_{f}\varphi$ defined by
\[
\mathscr{L}_{f}\varphi(x)
\equiv
\int_{E} e^{f(ax)}\varphi(ax)\, dp(a),
\quad \text{where}\ ax\equiv (a,x_1,x_2,\ldots).
\]
In particular, it follows by induction that, for all $n \in \mathbb{N}$, 
$dp^n (a_1,\ldots, a_n)$\break $\equiv  dp(a_1) \cdots dp(a_n)$ and 
$f_n(x) \equiv  \sum_{k=0}^{n-1} f(\sigma^k(x))$,
\begin{align*}
\mathscr{L}^n_f(\varphi)(x)
&=
\int_{E^n} e^{f_n(a_1\ldots a_n x)}
\varphi(a_1\ldots a_nx)\, dp(a_1) \cdots dp(a_n) \\
&\equiv 
\int_{E^n} e^{f_n(a x)}\varphi(ax)\, dp^n(a).
\end{align*}

Since $\|\mathscr{L}_{f}1\|_{\infty}<+\infty$ the Ruelle operator 
is bounded and the action of its dual (or Banach transpose) $\mathscr{L}_{f}^{*}$ 
on a generic element $\mu\in rba(X)$ is determined by
\[
\int_{X} \varphi\, d[\mathscr{L}_{f}^{*}\mu] 
= 
\int_{X} \mathscr{L}_{f}(\varphi)\, d\mu,
\quad \forall \varphi\in C_{b}(X,\mathbb{R}). 
\]

\begin{remark}
	Assume $E=\{1,\ldots,d\}$, the a priori measure $p$ is the normalized counting measure on $E$, 
	and $f$ is a continuous potential. Then we have, for all $\varphi\in C(X,\mathbb{R})$
	\[
	\mathscr{L}_{f}(\varphi)(x)
	=
	\int_{E} e^{f(ax)}\varphi(ax)\, dp(a)
	=
	\sum_{y\in \sigma^{-1}(x)} e^{\widetilde{f}(y)}\varphi(y),
	\]
	where $\widetilde{f}\equiv f-\log d$. This shows thus that, 
	in this particular setting, the Ruelle operator
	associated to a potential $f$ considered here coincides with the classical Ruelle operator
	but associated to a potential that differs from the original one by a constant.
\end{remark}

In order to motivate the concepts of pressure and entropy introduced 
in Section \ref{sec-pressure-entropy}, we prove in the sequel a 
Perron-Frobenius-Ruelle theorem for bounded H\"older potentials.


\section{Perron-Frobenius-Ruelle Theorem}\label{sec-RPF}
In this section we are interested in the space of 
bounded H\"older continuous functions 
$
\mathrm{Hol}(\alpha)
\equiv 
\mathrm{Hol}_{\alpha}(X,\mathbb{R})
,$   for  $0< \alpha< 1$, which
is defined as the space 
$\{f \in C_b(X,\mathbb{R}): \mathrm{D}_\alpha(f)<\infty \}$, where
\[ 
\mathrm{D}_{\alpha}(f) 
\equiv  
\sup_{x\neq y}
\dfrac{|f(x)-f(y)|}{d_{X}(x,y)^{\alpha}}.
\]

Combining H\"older continuity of $f$ 
with $d(\sigma^{n}(x),\sigma^{n}(y)) = 2^{n}d(x,y)$, which is valid 
for points having the same first $n$ coordinates, 
it follows from a standard argument that there exists $C_f$ such that 
\begin{equation}\label{eq:GM}
\left|1-e^{f_n(a y) -f_n(a x)}\right| 
\leq 
C_f  \, d(x,y)^\alpha  
\quad \forall x,y \in X \hbox{ and } a \in E^n.
\end{equation}

By \eqref{eq:GM}, it is now easy to see that 
$\mathscr{L}^n_{f}$ maps $\mathrm{Hol}(\alpha)$ to itself. 
Namely, for $f,\varphi \in \mathrm{Hol}(\alpha)$, 
and $x,y\in X$, we have 
\begin{eqnarray*}
|\mathscr{L}^n_f(\varphi)(x) - 
\mathscr{L}^n_f(\varphi)(y)|
&\leq
\int_{E^n} 
e^{f_n(a x)} 
\left| \left(1-e^{f_n(a y) -f_n(a x)}\right) \varphi(ax)\right| dp^n(a) 
\\
&\qquad + \int_{E^n} e^{f_n(a y)} \left| \varphi(ax) - \varphi(ay) \right| dp^n(a)
\\[0.2cm]
&\leq 
\left(
C_f \|\mathscr{L}^n_f(\varphi)\|_{\infty} 
+ 2^{-n} \mathrm{D}_\alpha(\varphi)\|\mathscr{L}^n_f1\|_{\infty} 
\right) 
d(x,y)^\alpha.
\end{eqnarray*}

Instead of constructing an $\mathscr{L}_f$-invariant function through application 
of the Arzel\`a-Ascoli theorem and then normalizing $\mathscr{L}_f$, 
we consider the family of operators $\{\mathbb{P}^m_{n}\}$ defined by, 
for $m\in \mathbb N$ and $n \in \mathbb N \cup \{0\}$, 
\[ 
\mathbb{P}^m_{n} (\varphi) 
\equiv 
\frac{\mathscr{L}^{m}_f( \varphi \mathscr{L}^{n}_f(1))}
{\mathscr{L}^{m+n}_f(1) }.
\]
Observe that, by construction, $\mathbb{P}^m_{n} (1) = 1$ and  
$\mathbb{P}^m_{k+l} \circ \mathbb{P}^k_{l} = \mathbb{P}^{k+m}_{l}$. 
Furthermore, the proof of Lemma 2.1 in \cite{MR3568728} 
is also applicable to the situation in here and gives that 
\[ 
\mathrm{D}_\alpha(\mathbb{P}^m_n(\varphi))
\leq  
C_f (2 \|\varphi\|_\infty + 2^{-m}\mathrm{D}_\alpha(\varphi)).
\]
As shown in \cite{MR3568728,StadlbauerZhang:2017a}, this estimate and the fact that $X$ is a full shift allows to deduce the following. With respect to the equivalent metric 
\[ \overline{d}(x,y)\equiv  \max\{ 1, 4 C_f d_X(x,y)^\alpha \}, \]
the space $(X,\overline{d})$ is separable and complete. In particular, as the diameter of $(X,\overline{d})$ is finite, 
the space $\mathscr{M}_{1}(X)$ is separable and 
complete with respect to the Wasserstein metric $d$  (\cite{MR2401600,MR2483740,MR1105086}), which is equal to, through Kantorovich's duality, 
\begin{equation} 
\label{eq:Wasserstein-through-Kantotrovich} 
d(\mu,\nu) \equiv 
\sup\left\{ 
		\int f d(\mu -\nu) :  f \hbox{ with } \sup\frac{|f(x)-f(y)|}{\overline{d}(x,y)} \leq 1 
 	\right\}. 
\end{equation}
The action of the operators $\{\mathbb{P}^m_{n}\}$  on the space of $\overline{d}$-Lipschitz functions then allows to deduce, following in verbatim the proof of Theorem 2.1 in \cite{MR3568728}, 
that their dual action on the space of probability measures strictly contracts the Wasserstein metric for some $m \in \mathbb{N}$ and uniformly in $n$.

Since $\mathbb{P}^m_{n}$ contracts $d$, it immediately follows from 
the composition rule that, for any probability measure $\nu_0\in \mathscr{M}_{1}(X)$, 
the sequence $((\mathbb{P}^m_0)^\ast(\nu_0))_{m\in\mathbb{N}}$ 
is a Cauchy sequence and therefore converges to a 
probability measure $\nu$, which, again by contraction, is independent 
from $\nu_0$. It then  follows  as in \cite{MR3568728} that $\nu$ 
is a conformal measure, 
that is, $\mathscr{L}^{\ast}_f(\nu) = \lambda \nu$ for some $\lambda >0$. Observe that, by conformality, $\lambda = \int \mathscr{L}(1)d\nu$.    
Moreover, by contraction, $\nu$ is the unique 
measure with this property (\cite[Prop. 2.1]{MR3568728}).
Furthermore, it follows from \eqref{eq:GM} that 
\[
\sup_{x,y,n} \frac{\mathscr{L}_{f}^{n}(1)(x)}{\mathscr{L}_{f}^{n}(1)(y)} < \infty,
\] 
which  implies, again by the contraction property, that, 
with $\delta_x$ referring to the Dirac measure in $x$,
\begin{eqnarray}\label{eq-auto-funcao} 
h(x) 
\equiv \lim_{n \to \infty} 
\frac{\int_{X} \mathscr{L}_{f}^{n}(1)d\delta_x}{\int_{X} \mathscr{L}_{f}^{n}(1)d\nu } 
=   
\lim_{n \to \infty} 
\frac{\mathscr{L}_{f}^{n}(1)(x)}{\int_{X} \mathscr{L}_{f}^{n}(1)d\nu } 
\end{eqnarray} 
exists for each $x \in X$ and is bounded away from $0$ and $\infty$. 
Similar to $\nu$, it follows that $\mathscr{L}_f(h) = \lambda h$ 
and that, up to a multiplication by a scalar, $h$ is the unique H\"older function with 
this property (\cite[Prop. 2.2]{MR3568728}). 
Moreover, the following version of exponential 
decay holds (\cite[Th. A]{MR3568728}).

\begin{theorem}\label{teo-hol-const-corr}
 There exist $C > 0$ and $s \in (0,1)$ such that, for $\varphi,\psi \in \mathrm{Hol}(\alpha)$
and $\psi>0$, 
\[ 
D_{\alpha}
\left(
	\frac{\mathscr{L}^{n}(\varphi) }{\mathscr{L}^{n}(\psi)}  
	- 
	\frac{\nu( \varphi) }{\nu( \psi)} 
\right) 
\leq 
C s^n 
\left(
	\mathrm{D}_\alpha(\varphi) 
	+\left| \frac{\nu(\varphi)}{\nu(\psi)} \right|
\mathrm{D}_\alpha(\psi) 
\right) \|1/\psi\|_\infty. 
\]
\end{theorem}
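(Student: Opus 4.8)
The plan is to follow the architecture of \cite[Th.~A]{MR3568728}, turning the contraction of the Markov operators $\mathbb{P}^m_n$ recorded above into an exponential estimate for the H\"older seminorm. Since $\mathscr{L}^0_f(1)=1$ one has $\mathbb{P}^n_0(\varphi)=\mathscr{L}^n_f(\varphi)/\mathscr{L}^n_f(1)$, and hence
\[
\frac{\mathscr{L}^n_f(\varphi)}{\mathscr{L}^n_f(\psi)}
=
\frac{\mathbb{P}^n_0(\varphi)}{\mathbb{P}^n_0(\psi)},
\qquad
\frac{\mathscr{L}^n_f(\varphi)}{\mathscr{L}^n_f(\psi)}-\frac{\nu(\varphi)}{\nu(\psi)}
=
\frac{1}{\mathbb{P}^n_0(\psi)}
\left(
u_n-\frac{\nu(\varphi)}{\nu(\psi)}\,v_n
\right),
\]
where $u_n\equiv\mathbb{P}^n_0(\varphi)-\nu(\varphi)$ and $v_n\equiv\mathbb{P}^n_0(\psi)-\nu(\psi)$. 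This reduces the statement to (i) an exponential decay of $u_n$ and $v_n$ in $\mathrm{Hol}(\alpha)$ and (ii) a uniform lower bound for $\mathbb{P}^n_0(\psi)$. For (ii) I would only use that $\mathbb{P}^n_0$ is a Markov operator, so that $\mathbb{P}^n_0(\psi)\geq\inf\psi=\|1/\psi\|_\infty^{-1}$; this is exactly what produces the factor $\|1/\psi\|_\infty$ on the right-hand side. I would also record that $u_n$ and $v_n$ are unchanged when a constant is added to $\varphi$ or $\psi$, which is what should let the final bound depend only on the seminorms $\mathrm{D}_\alpha(\varphi)$, $\mathrm{D}_\alpha(\psi)$ and not on the sup-norms.

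For (i), with $g\in\mathrm{Hol}(\alpha)$ and $g_n\equiv\mathbb{P}^n_0(g)$, I would argue in two stages. First, the composition rule $\mathbb{P}^{m_0}_k\circ\mathbb{P}^k_0=\mathbb{P}^{k+m_0}_0$ factors $(\mathbb{P}^n_0)^\ast$ into about $n/m_0$ copies of maps $(\mathbb{P}^{m_0}_\bullet)^\ast$, each contracting the Wasserstein metric $d$ by a fixed factor uniformly in the lower index. Combined with the finite $\overline{d}$-diameter of $X$, this gives $d((\mathbb{P}^n_0)^\ast\delta_x,\nu)\leq Cs^n$ uniformly in $x$, and then, through Kantorovich duality \eqref{eq:Wasserstein-through-Kantotrovich} and the comparison of the $\overline{d}$-Lipschitz constant of $g$ with $\mathrm{D}_\alpha(g)+\mathrm{osc}(g)$, a sup-norm bound $\|g_n-\nu(g)\|_\infty\leq Cs^n(\mathrm{D}_\alpha(g)+\mathrm{osc}(g))$; since $X$ is bounded, $\mathrm{osc}(g)\lesssim\mathrm{D}_\alpha(g)$. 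Second, I would promote this to a seminorm estimate via the inequality $\mathrm{D}_\alpha(\mathbb{P}^{m_0}_{n-m_0}(\phi))\leq C_f(2\|\phi\|_\infty+2^{-m_0}\mathrm{D}_\alpha(\phi))$ recorded before the theorem: applied to $\phi=g_{n-m_0}-\nu(g)$, together with $\mathbb{P}^{m_0}_{n-m_0}(g_{n-m_0}-\nu(g))=g_n-\nu(g)$, it yields
\[
\mathrm{D}_\alpha(g_n-\nu(g))
\leq
2C_f\,\|g_{n-m_0}-\nu(g)\|_\infty
+C_f2^{-m_0}\,\mathrm{D}_\alpha(g_{n-m_0}-\nu(g)).
\]
Choosing $m_0$ large enough that $C_f2^{-m_0}<1$ (and that the Wasserstein contraction holds) and inserting the first-stage sup-norm decay, a routine induction solves this recursion and gives $\mathrm{D}_\alpha(g_n-\nu(g))\leq Cs^n\,\mathrm{D}_\alpha(g)$, after possibly enlarging $s\in(0,1)$.

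It then remains to assemble the quotient. Using the H\"older product and reciprocal rules on $\frac{1}{\mathbb{P}^n_0(\psi)}(u_n-\frac{\nu(\varphi)}{\nu(\psi)}v_n)$ and inserting $\|1/\mathbb{P}^n_0(\psi)\|_\infty\leq\|1/\psi\|_\infty$ together with $\mathrm{D}_\alpha(u_n)\leq Cs^n\mathrm{D}_\alpha(\varphi)$, $\mathrm{D}_\alpha(v_n)\leq Cs^n\mathrm{D}_\alpha(\psi)$ and the analogous sup-norm bounds, the dominant contribution is $Cs^n(\mathrm{D}_\alpha(\varphi)+|\nu(\varphi)/\nu(\psi)|\,\mathrm{D}_\alpha(\psi))\|1/\psi\|_\infty$, as claimed. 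The step I expect to be the real obstacle is precisely the passage from the Wasserstein contraction, which is a large-scale statement, to a genuine decay of the H\"older seminorm at small scales: the truncated metric $\overline{d}$ controls the oscillation of $g_n$ but not its local modulus, so the seminorm decay cannot be read off directly and must be bootstrapped from the sup-norm decay through the above recursion. Keeping the quotient estimate linear in $\mathrm{D}_\alpha(\varphi)$ and $\mathrm{D}_\alpha(\psi)$ also requires some care, and here I would follow the corresponding computation in \cite{MR3568728}.
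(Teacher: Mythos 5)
Your two-stage scheme is a faithful reconstruction of the mechanism of \cite[Th.~A]{MR3568728}, which is all the paper itself offers as a proof of this theorem: the identity $\mathbb{P}^n_0(\varphi)=\mathscr{L}^n_f(\varphi)/\mathscr{L}^n_f(1)$, the splitting into $u_n,v_n$, the lower bound $\mathbb{P}^n_0(\psi)\geq\|1/\psi\|_\infty^{-1}$, the passage from the Wasserstein contraction to sup-norm decay via Kantorovich duality (modulo the small point that $\nu$ is the limit, not a fixed point, of $(\mathbb{P}^n_0)^\ast$, which the composition rule handles), the bootstrap recursion through $\mathrm{D}_\alpha(\mathbb{P}^{m_0}_{n-m_0}(\phi))\leq C_f(2\|\phi\|_\infty+2^{-m_0}\mathrm{D}_\alpha(\phi))$, and the reduction to seminorms via invariance under additive constants are all correct. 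The genuine gap is precisely the step you postpone to the reference, and it is not a matter of bookkeeping. The quotient rule applied to $w_n/\mathbb{P}^n_0(\psi)$, with $w_n\equiv u_n-\frac{\nu(\varphi)}{\nu(\psi)}v_n$, unavoidably produces the cross term $\|w_n\|_\infty\,\mathrm{D}_\alpha\bigl(1/\mathbb{P}^n_0(\psi)\bigr)\leq C^2s^{2n}\bigl(\mathrm{D}_\alpha(\varphi)+\bigl|\tfrac{\nu(\varphi)}{\nu(\psi)}\bigr|\mathrm{D}_\alpha(\psi)\bigr)\mathrm{D}_\alpha(\psi)\|1/\psi\|_\infty^{2}$, in which $\mathrm{D}_\alpha(\psi)$ enters \emph{without} the weight $|\nu(\varphi)/\nu(\psi)|$. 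Its ratio to the claimed right-hand side is of order $s^{n}\mathrm{D}_\alpha(\psi)\|1/\psi\|_\infty$, a quantity invariant under rescaling of $\psi$ which can be arbitrarily large; what your assembly actually yields is the stated bound multiplied by the extra factor $\bigl(1+Cs^n\mathrm{D}_\alpha(\psi)\|1/\psi\|_\infty\bigr)$, i.e.\ a constant depending on $\psi$.

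No amount of care can remove that factor, because the inequality as stated is false. Take $E=\{0,1\}$, $p$ uniform, $f=0$, so that $\mathscr{L}_f(1)=1$, $\lambda=1$ and $\nu$ is the Bernoulli $(\tfrac12,\tfrac12)$ measure. For $n\in\mathbb{N}$ and $M>0$ put $\varphi(z)=1-2\cdot 1_{\{z_{n+1}=1\}}$ and $\psi(z)=1+M\cdot 1_{\{z_1=\cdots=z_{2n}=0\}}$. Then $\nu(\varphi)=0$, $\|1/\psi\|_\infty=1$, $\mathrm{D}_\alpha(\varphi)=2^{1+(n+1)\alpha}$, while $\mathscr{L}^n_f(\varphi)(x)=1-2\cdot 1_{\{x_1=1\}}$ and $\mathscr{L}^n_f(\psi)(x)=1+M2^{-n}1_{\{x_1=\cdots=x_n=0\}}$. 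Comparing the quotient at $\bar 0=(0,0,\ldots)$ and at the point obtained from $\bar 0$ by flipping the $n$-th coordinate (these have distance $2^{-n}$) gives
\[
\mathrm{D}_\alpha\Bigl(\frac{\mathscr{L}^n_f(\varphi)}{\mathscr{L}^n_f(\psi)}-\frac{\nu(\varphi)}{\nu(\psi)}\Bigr)\;\geq\;2^{n\alpha}\Bigl(1-\frac{1}{1+M2^{-n}}\Bigr),
\]
whereas the claimed bound equals $Cs^n2^{1+(n+1)\alpha}$ independently of $M$; so for any fixed $C,s$, choosing $n$ with $Cs^n2^{1+\alpha}<\tfrac12$ and $M\geq 2^n$ violates the inequality. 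The obstacle you flagged is therefore a defect of the statement itself: for signed $\varphi$ with $\nu(\varphi)=0$ the $\mathrm{D}_\alpha(\psi)$-term on the right disappears, yet the oscillation of $\psi$ still pollutes the quotient. (This also indicates that the transcription of \cite[Th.~A]{MR3568728} here is too strong; the $\psi$-dependent bound your argument does prove is all that is used in the paper's one application, Proposition~\ref{prop:quasicompactness}, where the denominator is the fixed eigenfunction $h$.)
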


We remark that Theorem \ref{teo-hol-const-corr} applied to $\psi= h$ (normalized 
eigenfunction in the sense that $\nu(h)=1$) and $\varphi=1$ give
the following estimates    
$D_{\alpha}( \mathscr{L}^{n}_{f}(1)/(\lambda^{n}h) -1)
\leq 
2CD_{\alpha}(h) \|h\|_\infty^{-1} s^n.$
Therefore we have, uniformly in $x\in X$
\begin{eqnarray}\label{eq-est-Ln-Lambdan}
1-2C s^n
\leq
\frac{\mathscr{L}^{n}_{f}(1)(x) }{\lambda^nh(x)}  
\leq 
1+2C s^n.
\end{eqnarray}
Since $0<s<1$ and $\|\log h\|_\infty < \infty$, it follows that 
$n^{-1}\log \mathscr{L}^{n}_{f}(1)(x)
\to \log \lambda.$
Furthermore, $\lambda=\rho(\mathscr{L}_{f}|_{ \mathrm{Hol}(\alpha) })$ the spectral radius 
of the action of $\mathscr{L}_{f}$ on $\mathrm{Hol}(\alpha)$.

\bigskip 
 
As an another application of the above theorem, one obtains almost immediately 
quasi-compactness of the normalized operator. 
In order to define the relevant operators and norms, 
let $h$ refer to the function as constructed above and, 
for $\varphi:X \to \mathbb{R}$ bounded and measurable, 
set $\|\varphi\|_\alpha \equiv \| \varphi\|_\infty + D_\alpha(\varphi)$ 
and
\[ 
Q(\varphi)(x) 
\equiv 
\frac{\mathscr{L}_{f}(h \varphi)(x)}{\lambda h(x)}, \quad \Pi(\varphi)(x) 
\equiv \int_{X} \varphi h\,  d\nu.  
\]
\begin{proposition}\label{prop:quasicompactness}
$\Pi$ and $Q$ act on $\mathrm{Hol}(\alpha)$ as bounded operators, and $\Pi Q = Q \Pi =\Pi$. Furthermore,   
$\|(Q - \Pi)^n\|_\alpha \leq Cs^n$, where $s$ is as in 
Theorem \ref{teo-hol-const-corr}, and the splitting 
$\mathrm{Hol}(\alpha) \equiv \mathbb{R} \oplus \ker(\Pi)$ into closed subspaces, 
with $\mathbb{R}$ standing for the constant functions, is invariant under $Q$ and $\Pi$. 
Furthermore, $Q|_\mathbb{R} = \Pi|_\mathbb{R} = \mathit{id}$.
\end{proposition}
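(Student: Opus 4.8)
The plan is to derive everything from the spectral-decomposition estimate \eqref{eq-est-Ln-Lambdan} and the exponential decay in Theorem~\ref{teo-hol-const-corr}. First I would verify boundedness of $Q$ and $\Pi$ on $\mathrm{Hol}(\alpha)$. For $\Pi$ this is immediate: $\Pi(\varphi)$ is a constant function, so $D_\alpha(\Pi(\varphi))=0$ and $\|\Pi(\varphi)\|_\alpha = |\nu(\varphi h)| \leq \|h\|_\infty \|\varphi\|_\infty$. For $Q$, since $h$ is H\"older and bounded away from $0$ and $\infty$, multiplication by $h$ and division by $\lambda h$ preserve $\mathrm{Hol}(\alpha)$; combined with the fact that $\mathscr{L}_f$ maps $\mathrm{Hol}(\alpha)$ to itself (the displayed Lasota--Yorke-type estimate before the operators $\mathbb{P}^m_n$ were introduced), this gives a bounded operator. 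The key algebraic identity is that $Q$ is the normalized transfer operator: writing $Q(\varphi) = \mathscr{L}_f(h\varphi)/(\lambda h)$, conformality $\mathscr{L}_f^*\nu = \lambda\nu$ yields $\nu(h\,Q(\varphi)) = \lambda^{-1}\nu(\mathscr{L}_f(h\varphi)) = \nu(h\varphi)$, so $\nu(h\cdot)$ is $Q$-invariant; this is exactly what makes $\Pi$ a genuine projection adapted to $Q$.

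\textbf{Next I would establish the relations} $\Pi Q = Q\Pi = \Pi$ and $\Pi^2 = \Pi$. The identity $\Pi^2=\Pi$ follows from $\Pi(1) = \nu(h) = 1$ (normalization) since $\Pi(\varphi)$ is the constant $\nu(\varphi h)$, whence $\Pi(\Pi(\varphi)) = \nu(\varphi h)\,\nu(h) = \nu(\varphi h) = \Pi(\varphi)$. For $\Pi Q = \Pi$, use the invariance just noted: $\Pi(Q(\varphi)) = \nu(h\,Q(\varphi)) = \nu(h\varphi) = \Pi(\varphi)$. For $Q\Pi = \Pi$, observe $\Pi(\varphi)$ is constant and $Q(1) = \mathscr{L}_f(h)/(\lambda h) = 1$ by the eigenfunction equation $\mathscr{L}_f(h)=\lambda h$, so $Q(\Pi(\varphi)) = \nu(\varphi h)\,Q(1) = \nu(\varphi h) = \Pi(\varphi)$. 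Since $Q(1)=1$ and $\Pi(1)=1$, the restriction $Q|_{\mathbb{R}} = \Pi|_{\mathbb{R}} = \mathit{id}$ is immediate, and $\mathrm{Hol}(\alpha) = \mathbb{R}\oplus\ker(\Pi)$ is the standard splitting induced by the rank-one projection $\Pi$.

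\textbf{The main work is the contraction bound} $\|(Q-\Pi)^n\|_\alpha \leq Cs^n$. First I would note $(Q-\Pi)^n = Q^n - \Pi$: using $\Pi Q = Q\Pi = \Pi$ and $\Pi^2=\Pi$, induction gives $(Q-\Pi)^2 = Q^2 - Q\Pi - \Pi Q + \Pi = Q^2 - \Pi$, and so on. A direct computation unfolds the normalization to show $Q^n(\varphi) = \mathscr{L}_f^n(h\varphi)/(\lambda^n h)$, so that
\[
(Q^n - \Pi)(\varphi) = \frac{\mathscr{L}_f^n(h\varphi)}{\lambda^n h} - \nu(h\varphi) = \frac{1}{h}\left( \frac{\mathscr{L}_f^n(h\varphi)}{\lambda^n} - h\,\nu(h\varphi)\right).
\]
Applying Theorem~\ref{teo-hol-const-corr} with the choice $\psi = h$ (so $\mathscr{L}^n(\psi)=\lambda^n h$ and $\nu(\psi)=\nu(h)=1$) and numerator $\varphi \mapsto h\varphi$ controls $D_\alpha$ of the bracketed difference by $Cs^n(\mathrm{D}_\alpha(h\varphi) + |\nu(h\varphi)|\,\mathrm{D}_\alpha(h))\|1/h\|_\infty$, and then I would divide by $h$ using that $1/h$ is bounded and H\"older to recover a bound on $D_\alpha((Q^n-\Pi)(\varphi))$. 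A parallel but easier estimate on $\|\cdot\|_\infty$ comes from \eqref{eq-est-Ln-Lambdan} together with the exponential decay, giving the supremum-norm part of the $\|\cdot\|_\alpha$ bound.

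\textbf{The step I expect to be the main obstacle} is the bookkeeping in passing from Theorem~\ref{teo-hol-const-corr}, which bounds the H\"older seminorm of a ratio $\mathscr{L}^n(\varphi)/\mathscr{L}^n(\psi)$, to the operator norm $\|\cdot\|_\alpha = \|\cdot\|_\infty + D_\alpha(\cdot)$ of $(Q-\Pi)^n$: one must simultaneously control the supremum norm and absorb the extra factors of $h$, $1/h$, and $\mathrm{D}_\alpha(h)$ into a single constant $C$ uniform in $n$, while verifying that the seminorm of $h\varphi$ is dominated by $\|\varphi\|_\alpha$ up to a constant depending only on $h$. Everything else is formal manipulation of the two eigen-relations $\mathscr{L}_f^* \nu = \lambda\nu$ and $\mathscr{L}_f h = \lambda h$.
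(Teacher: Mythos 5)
Your outline reproduces the paper's own argument almost step for step in its first two parts: boundedness of $Q$ and $\Pi$ from $h$ being H\"older and bounded away from $0$ and $\infty$, the identities $\Pi Q = Q\Pi = \Pi$ via conformality of $\nu$ and $\mathscr{L}_f(h)=\lambda h$, the reduction $(Q-\Pi)^n = Q^n-\Pi$, and the seminorm estimate via Theorem \ref{teo-hol-const-corr} with numerator $h\varphi$ and denominator $\psi=h$. One bookkeeping remark: since $\mathscr{L}^n_f(h\varphi)/\mathscr{L}^n_f(h) = Q^n(\varphi)$ and $\nu(h\varphi)/\nu(h)=\Pi(\varphi)$, the theorem bounds $D_\alpha\bigl(Q^n(\varphi)-\Pi(\varphi)\bigr)$ \emph{directly}; your detour of bounding a ``bracketed difference'' and then dividing by $h$ is both unnecessary and not what the theorem provides (it controls the ratio, not the bracket), though this does not damage the argument.

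The genuine gap is in the supremum-norm half of the $\|\cdot\|_\alpha$ bound. You assert it follows from \eqref{eq-est-Ln-Lambdan} ``together with the exponential decay,'' but neither tool can deliver it: Theorem \ref{teo-hol-const-corr} controls only H\"older \emph{seminorms}, which are blind to additive constants, and \eqref{eq-est-Ln-Lambdan} is a statement about $\mathscr{L}^n_f(1)$ alone, saying nothing about $\mathscr{L}^n_f(h\varphi)$ for general $\varphi$. A function can have arbitrarily small $D_\alpha$ and sup norm of order one, so an anchoring argument is unavoidable. The paper's (one-line) missing idea is this: since $\Pi Q^n = \Pi$, one has $\int_X (Q^n-\Pi)(\varphi)\, h\, d\nu = 0$; as $h\,d\nu$ is a probability measure, $(Q^n-\Pi)(\varphi)$ has infimum $\leq 0 \leq$ supremum, and since $\sup_{x,y} d_X(x,y)\le 1$ this yields $\|(Q^n-\Pi)(\varphi)\|_\infty \le D_\alpha\bigl((Q^n-\Pi)(\varphi)\bigr)$, so the seminorm estimate already implies the sup-norm estimate. (Your route can be salvaged by writing $Q^n(\varphi) = \mathbb{P}^n_0(h\varphi)\cdot \mathscr{L}^n_f(1)/(\lambda^n h)$ and invoking the Wasserstein contraction of $(\mathbb{P}^n_0)^\ast$ together with \eqref{eq-est-Ln-Lambdan}, but that appeals to the contraction machinery behind Theorem \ref{teo-hol-const-corr}, not to the two facts you cite.) Everything else in your proposal is correct.
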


\begin{proof}
Observe that $h \in \mathrm{Hol}(\alpha)$ is bounded from above and below. 
Hence, $\varphi h \in \mathrm{Hol}(\alpha)$ 
and $\varphi/h \in \mathrm{Hol}(\alpha)$ for any 
$\varphi \in \mathrm{Hol}(\alpha)$ which implies that $Q$ 
acts on $\mathrm{Hol}(\alpha)$. 
Furthermore, using conformality of $\nu$ and invariance of $h$, 
\begin{eqnarray*}
\Pi \circ Q (\varphi) 
&=&  
\int_{X} \lambda^{-1} {\mathscr{L}_{f}(h \varphi)}\, d\nu 
=   
\int_{X} h \varphi\, d \nu  
=  
\Pi(\varphi)
\\[0.3cm] 
Q \circ\Pi(\varphi)  
&=& 
\frac{\mathscr{L}_{f}(h \Pi(\varphi))}{\lambda h} 
= \Pi(\varphi).
\end{eqnarray*}
Hence, $\Pi Q = Q \Pi =\Pi$, and, in particular, $(Q - \Pi)^n = Q^n - \Pi$. 
Hence, by Theorem \ref{teo-hol-const-corr} applied to 
$h \varphi$ in the numerator and $ h$ in the denominator,
\begin{eqnarray*}
D_\alpha( (Q - \Pi)^n(\varphi)) 
&=& D_\alpha( Q^n(\varphi) - \Pi(\varphi))  
\\
&=& 
 D_\alpha\left( Q^n\left(\varphi - \Pi(\varphi)\right)\right) 
\\
&\leq&
C s^n \left(D_\alpha (h\varphi) + |\Pi(\varphi - \Pi(\varphi))| 
		\mathrm{D}_\alpha(h)  \right) \|1/h\|_\infty 
\\
& \leq& 
C \|1/h\|_\infty   s^n \left( \|h\|_\infty D_\alpha (\varphi) 
   +\|\varphi\|_\infty \mathrm{D}_\alpha(h)  \right) 
\\
&\leq& 
C^\ast s^n\| \varphi \|_\alpha.
\end{eqnarray*}
As $\int_{X} (Q - \Pi)^n(\varphi) h\, d\nu = 0$, 
it follows from $\sup_{x,y} d_X(x,y)=1$ 
that 
\[
\|(Q - \Pi)^n(\varphi)\|_\infty \leq  D_\alpha( (Q - \Pi)^n(\varphi)).
\]
Hence, $\|(Q - \Pi)^n(\varphi)\|_\alpha \leq C^\ast   s^n\| \varphi \|_\alpha$. 
The remaining assertion is obvious. 
\end{proof}

Provided that $\mathscr{L}_{f}(1)=1$, the above splitting now allows to apply the  
very general version of Nagaev's method by Hennion and Herv\'e in \cite{MR1862393} as follows. 
As  the space of complex-valued Hölder continuous functions 
$\mathfrak{B}$ is a Banach algebra,  
 condition $\mathcal{H}[1]$ of Hennion and Herv\'e is satisfied. Furthermore, condition $\mathcal{H}[2]$ in there follows from Proposition \ref{prop:quasicompactness}. 
 Now assume that $\xi$ is a real-valued Hölder continuous function and that $t \in \mathbb{R}$. By Lemma VIII.10 in \cite{MR1862393}, the  operator $\mathscr{L}_{f + i\xi t}$ acts as bounded
operator on $\mathfrak{B}$ and is analytic in $t$. Hence, also $\mathcal{H}[3]$ and $\hat{\mathcal{D}}$ are satisfied and Theorems A, B and C in \cite{MR1862393} are applicable. 

In order to state the result, set $S_n(\xi) \equiv  \sum_{k=0}^{n-1}\xi\circ \sigma^k$ and
 recall that $\xi$ is referred to as a non-arithmetic observable if the spectral radius of 
$\mathscr{L}_{f + i\xi t}$ is smaller than 1 for each $t \neq 0$.

\begin{proposition} Assume that $\xi$ is a real valued Hölder continuous function such that $\int \xi d\nu =0$.
Then $s^2 = \lim_n \frac{1}{n} \int (S_n(\xi))^2 d\nu $ exists and the following versions of central limit theorems (CLTs) hold. In there, $Z$ refers to a $N(0,s)$-distributed random variable.
\begin{enumerate}
\item (CLT with rate). If $s > 0$, then there exists $C> 0$ such that 
\[ \sup_{u \in \mathbb{R}}\left| \nu\left(\{x \in X: S_n(\xi)(x) \leq u \sqrt{n} \} \right) - P(Z \leq u)\right| \leq Cn^{-\frac{1}{2}}. \]
\item (Local CLT). If $s > 0$ and $\xi$ is non-arithmetic, then for any $g:\mathbb{R}  \to \mathbb{R}$ continuous with $\lim_{|u| \to \infty} u^2g(u) =0$,
\[ \lim_{n \to \infty}  \sup_{t \in \mathbb{R}} 
\left| \sqrt{2\pi n} s \int g (S_n(\xi) - u)d \nu  - e^{-\frac{u^2}{2ns^2}} E(g(Z)) \right| =0  .\]
\end{enumerate}
\end{proposition}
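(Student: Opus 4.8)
The plan is to apply the Nagaev--Guivarc'h spectral method in the form established by Hennion and Herv\'e in \cite{MR1862393}, whose hypotheses $\mathcal{H}[1]$--$\mathcal{H}[3]$ and $\hat{\mathcal{D}}$ have already been verified above. The starting point is a Fourier--operator identity relating the Birkhoff sums $S_n(\xi)$ to iterates of the perturbed transfer operator $\mathscr{L}_{f+i\xi t}$. Under the standing normalization $\mathscr{L}_f(1)=1$, the eigenvalue $\lambda$ equals $1$, the eigenfunction $h$ is the constant $1$ by uniqueness, and $\nu$ is $\sigma$-invariant with $\mathscr{L}_f^{*}\nu=\nu$. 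First I would show, by induction on $n$ directly from the definition of the operator and the identity $(f+i\xi t)_n = f_n + i t S_n(\xi)$, that
\[
\mathscr{L}_{f+i\xi t}^n(\varphi) = \mathscr{L}_f^n\!\left(e^{i t S_n(\xi)}\varphi\right),
\]
so that integrating against $\nu$ and using $\mathscr{L}_f^{*}\nu=\nu$ yields the characteristic function
\[
\nu\!\left(e^{i t S_n(\xi)}\right) = \int_X \mathscr{L}_{f+i\xi t}^n(1)\, d\nu .
\]
This reduces the asymptotic analysis of the distribution of $S_n(\xi)$ under $\nu$ to the spectral behaviour of $t\mapsto \mathscr{L}_{f+i\xi t}$ near $t=0$.

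Next I would invoke the analytic perturbation theory supplied by Theorem A of \cite{MR1862393}: by quasi-compactness (Proposition \ref{prop:quasicompactness}) the operator $\mathscr{L}_f$ has the simple dominant eigenvalue $1$ separated from the rest of the spectrum, so for $t$ in a neighbourhood of $0$ the operator $\mathscr{L}_{f+i\xi t}$ admits a simple dominant eigenvalue $\lambda(t)$, analytic in $t$ with $\lambda(0)=1$, together with a complementary part contracting uniformly at some rate $\rho<1$. Differentiating the eigenvalue relation at $t=0$ and using the centering hypothesis $\int \xi\, d\nu = 0$ gives $\lambda'(0)= i\,\nu(\xi)=0$, while the second derivative yields $\lambda''(0)=-s^2$ with
\[
s^2 = \nu(\xi^2) + 2\sum_{k\geq 1}\nu(\xi\cdot \xi\circ\sigma^k) = \lim_{n}\tfrac{1}{n}\,\nu\!\left(S_n(\xi)^2\right),
\]
the limit existing precisely because the correlations decay geometrically by Theorem \ref{teo-hol-const-corr}. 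Thus $\lambda(t)=1-\tfrac{s^2 t^2}{2}+o(t^2)$, which is the Gaussian signature driving the central limit theorem.

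With these ingredients the three conclusions follow from the corresponding statements in \cite{MR1862393}. The existence of $s^2$ and the basic CLT come from Theorem A; the Berry--Esseen rate in part (1) is Theorem B, whose moment hypotheses are automatic here since $\xi$, being H\"older continuous on the bounded space $X$, is bounded and hence possesses moments of every order, and the nondegeneracy $s>0$ is what makes the rate meaningful. The local limit theorem in part (2) is Theorem C, for which the essential extra input is the non-arithmeticity assumption, i.e. that the spectral radius of $\mathscr{L}_{f+i\xi t}$ is strictly smaller than $1$ for every $t\neq 0$; this excludes peripheral spectrum away from $t=0$ and supplies the oscillatory-integral control on compact $t$-intervals that the Fourier inversion underlying the local CLT requires.

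The main technical work, namely the local spectral perturbation near $t=0$ and the translation into distributional statements, is largely absorbed into Theorems A, B and C of \cite{MR1862393} once their hypotheses are in force as verified above. The delicate points are therefore two. First, the identification of $s^2$: one must show $\lambda''(0)=-s^2$ and that $\tfrac1n\nu(S_n(\xi)^2)$ converges to the same quantity, which hinges on the summability of the correlations $\nu(\xi\cdot\xi\circ\sigma^k)$ guaranteed by the geometric decay in Theorem \ref{teo-hol-const-corr}. Second, for the local CLT the essential input is the non-arithmeticity hypothesis, which controls the contribution of large frequencies $t$ to the Fourier-inversion integral and is exactly where the proof is most sensitive; the uncountable alphabet poses no additional difficulty here, since all estimates are carried out on $\mathrm{Hol}(\alpha)$ through the bounds already established.
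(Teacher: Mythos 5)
Your proposal is correct and takes essentially the same route as the paper: the paper's proof consists precisely of verifying Hennion--Herv\'e's conditions $\mathcal{H}[1]$, $\mathcal{H}[2]$, $\mathcal{H}[3]$ and $\hat{\mathcal{D}}$ (via the Banach-algebra structure of the complex H\"older space, Proposition \ref{prop:quasicompactness}, and Lemma VIII.10 of \cite{MR1862393}) and then invoking Theorems A, B and C of \cite{MR1862393}. The additional details you supply --- the identity $\mathscr{L}_{f+i\xi t}^n(\varphi)=\mathscr{L}_f^n\bigl(e^{itS_n(\xi)}\varphi\bigr)$, the perturbation expansion $\lambda(t)=1-\tfrac{s^2t^2}{2}+o(t^2)$, and the Green--Kubo identification of $s^2$ via geometric decay of correlations --- are the standard internal content of those theorems and are fully consistent with the paper's argument.
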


\section{Pressure, entropy and their equilibrium states}\label{sec-pressure-entropy}

In this section we define the concepts of entropy and pressure considered here. 
Before proceeding we recall that in the context of
uncountable alphabets, both entropy and pressure are usually 
introduced as $p$-dependent concepts, 
see for example \cite{MR2864625,MR1241537,MR2807681,MR3377291}. 
\bigskip 

We say that a potential $f\in C_{b}(X,\mathbb{R})$ is normalized if $\mathscr{L}_{f}1=1$.
Consider the set 
$
\mathscr{G}
\equiv 
\{
\mu\in \mathscr{M}_{1}(X): 
\mathscr{L}_{f}^{*}\mu=\mu \ \text{for some normalized potential}\ f\in \mathrm{Hol}(\alpha)
\}.
$
Following \cite{MR3377291}, we define the entropy of $\mu\in\mathscr{G}$ as
$
\mathrm{h}^{\mathrm{v}}(\mu) 
\equiv 
-\langle \mu, f\rangle
$,
where $f$ is some normalized potential in $\mathrm{Hol}(\alpha)$ arbitrarily chosen so that 
$\mathscr{L}_{f}^{*}\mu=\mu$. 
Actually, similarly to \cite{MR3377291} we can prove that for any $\mu\in \mathscr{G}$ we have
\begin{eqnarray}\label{def-entropy}
\mathrm{h}^{\mathrm{v}}(\mu) 
=
\inf_{g\in \mathrm{Hol}(\alpha)} -\langle \mu,g\rangle + \log \lambda_{g},
\end{eqnarray}
where $\lambda_{g}$ is the eigenvalue obtained in the last section. 

Since the above expression makes sense for any $\mu\in rba(X)$ we have a natural way to 
define the entropy of a bounded finitely additive measure.

Next we obtain a generalization of the classical variational principle. 
Before we will make a few observations 
and introduce some notations. 
We first observe that the constant function $f=1$ is in $C_{b}(X,\mathbb{R})$
and so the set of all finitely additive probability measures
\[
\mathscr{M}_{1}^{a}(X)
\equiv 
\bigcap_{ \substack{ f\in C_{b}(X,\mathbb{R}) \\ f\geq 0 } }
\{\mu\in rba(X): \mu(1)=1\ \text{and}\ \mu(f)\geq 0 \}
\]
is a closed subset of the closed unit ball 
$\{\mu\in rba(X):\|\mu\|_{TV}\leq 1 \}$
in the weak-$*$-topology. This fact together with the 
Banach-Alaoglu theorem implies that $\mathscr{M}_{1}^{a}(X)$ is a compact
space.

Note that the space of all non-negative shift-invariant finitely additive measures 
\[
\mathscr{M}_{\sigma}^{a}(X)
\equiv
\{\mu\in \mathscr{M}_{1}^{a}(X):\  \mu(f\circ\sigma)=\mu(f),\ \ \forall f\in C_{b}(X,\mathbb{R})  \}
\]
is also a compact, with respect to the weak-$*$-topology. 
Indeed, let $(\mu_{d})_{d\in D}$ a topological net in $\mathscr{M}_{\sigma}^{a}(X)$ and
suppose that $\mu_{d}\to \mu$, in the weak-$*$-topology. Then 
for any $g\in C_{b}(X,\mathbb{R})$ we have
$
\mu(g\circ \sigma)
=
\lim_{d\in D}  \mu_{d}(g\circ\sigma)
=
\lim_{d\in D}  \mu_{d}(g)
=
\mu(g),
$
where the last equality follows from the weak-$*$ continuity of $\mu\in rba(X)$.
Of course, $\mu(g)\geq 0$ whenever $g\geq 0$ and $\mu(1)=1$.

\begin{definition}(Pressure Functional)
\label{def-top-pressure}
The functional $P:C_{b}(X,\mathbb{R})\to \mathbb{R}$
given by 
\[
P(f) 
\equiv 
\sup_{\mu\in \mathscr{M}_{\sigma}^{a}(X) } 
\mathrm{h}^{\mathrm{v}}(\mu) +\langle\mu,f\rangle
\]
is called the pressure functional and the real number $P(f)$ is called
topological pressure of $f$. 
\end{definition}

\begin{proposition}
	The pressure functional $P$ is a 
	convex function on $C_{b}(X,\mathbb{R})$.
\end{proposition}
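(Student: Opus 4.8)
The plan is to exhibit $P$ as the pointwise supremum of a family of affine functionals of $f$, from which convexity follows immediately. For each fixed $\mu\in\mathscr{M}_{\sigma}^{a}(X)$ I would consider the functional $\Phi_{\mu}\colon C_{b}(X,\mathbb{R})\to\mathbb{R}\cup\{-\infty\}$ given by $\Phi_{\mu}(f)\equiv\mathrm{h}^{\mathrm{v}}(\mu)+\langle\mu,f\rangle$. Since the entropy term $\mathrm{h}^{\mathrm{v}}(\mu)$ does not depend on $f$ and the pairing $f\mapsto\langle\mu,f\rangle$ is linear, each $\Phi_{\mu}$ is affine, hence convex, in $f$. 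Because $P=\sup_{\mu\in\mathscr{M}_{\sigma}^{a}(X)}\Phi_{\mu}$, the statement reduces to the elementary fact that a pointwise supremum of convex functions is convex.

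To carry this out, I would fix $f,g\in C_{b}(X,\mathbb{R})$ and $t\in[0,1]$, and for an arbitrary $\mu\in\mathscr{M}_{\sigma}^{a}(X)$ use linearity of the integral against $\mu$ to write
\[
\Phi_{\mu}(tf+(1-t)g)
=
\mathrm{h}^{\mathrm{v}}(\mu)+t\langle\mu,f\rangle+(1-t)\langle\mu,g\rangle
=
t\,\Phi_{\mu}(f)+(1-t)\,\Phi_{\mu}(g).
\]
Bounding the right-hand side by $\Phi_{\mu}(f)\leq P(f)$ and $\Phi_{\mu}(g)\leq P(g)$ yields $\Phi_{\mu}(tf+(1-t)g)\leq tP(f)+(1-t)P(g)$. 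As this holds for every $\mu$, taking the supremum over $\mu\in\mathscr{M}_{\sigma}^{a}(X)$ on the left gives $P(tf+(1-t)g)\leq tP(f)+(1-t)P(g)$, which is exactly the asserted convexity.

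There is no real obstacle in this argument; the only point deserving care is that the $\Phi_{\mu}$ may take the value $-\infty$, so one must check that no indeterminate $\infty-\infty$ occurs. This is harmless: taking $g=0$ in the infimum formula \eqref{def-entropy} shows $\mathrm{h}^{\mathrm{v}}(\mu)\leq\log\lambda_{0}=0$ (since $\mathscr{L}_{0}(1)=1$ forces $\lambda_{0}=1$), so the entropy is never $+\infty$, while boundedness of $f$ makes $\langle\mu,f\rangle$ finite. Hence each $\Phi_{\mu}(f)$ lies in $\mathbb{R}\cup\{-\infty\}$, the displayed identity holds with the usual conventions, and any $\mu$ with $\mathrm{h}^{\mathrm{v}}(\mu)=-\infty$ simply contributes $-\infty$ and does not affect the suprema.
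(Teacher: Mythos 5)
Your proof is correct and is essentially the argument the paper intends: Definition \ref{def-top-pressure} exhibits $P$ as a pointwise supremum of the affine functionals $f\mapsto \mathrm{h}^{\mathrm{v}}(\mu)+\langle\mu,f\rangle$, and the paper's one-line proof simply invokes this, which you have written out in full. Your additional check that $\mathrm{h}^{\mathrm{v}}(\mu)\leq 0$ (so no $\infty-\infty$ ambiguity can occur) is a careful touch consistent with \eqref{def-entropy}, but it does not change the route.
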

\begin{proof}
	The convexity follows immediately from Definition \ref{def-top-pressure}.
\end{proof}

Before we proceed we would like to explain why the theory, that will be developed below,
  is not comprised in \cite{MR753071}. In there, Phelps and Israel
developed an abstract theory of generalized pressure and 
presented some applications to lattice gases.
In their work, the space $X$ is supposed to be a metric compact space, and a 
pressure functional is any real-valued convex function $\mathscr{P}$ 
defined on $C_{b}(X,\mathbb{R})=C(X,\mathbb{R})$ 
satisfying the  conditions
\begin{enumerate}
	\item $\mathscr{P}(f+c)=\mathscr{P}(f)+c$,
	\item if $f\leq 0$, then $\mathscr{P}(f)\leq \mathscr{P}(0)$,
	\item if $f\geq 0$, then $\|q(f)\|\leq \mathscr{P}(f)$,
	\item if $g\in \mathscr{I}$, then $\mathscr{P}(f+g)=\mathscr{P}(f)$,
\end{enumerate}
where $c\in \mathbb{R}$ is a constant, $\mathscr{I}$ denotes 
the subspace of $C(X,\mathbb{R})$ generated by 
the set $\{g-g\circ\sigma: g\in C(X,\mathbb{R}) \}$ and 
$q:C(X,\mathbb{R})\to C(X,\mathbb{R})/\mathscr{I}$ is the quotient 
map. In \cite{MR753071}, when the authors introduce entropy,  condition
\textit{(3)} is replaced by a stronger one. This new condition, which we call \textit{(3')}, 
is a kind of coercivity condition. To be more precise,
it requires $\|f\|_{\infty}\leq \mathscr{P}(f)$ whenever $f\geq 0$. 
Afterwards, 
for a given 
pressure functional $\mathscr{P}$ satisfying \textit{(3')},
the authors define the entropy $\mathfrak{h}\equiv \mathfrak{h}(\mathscr{P})$ 
as  the Legendre-Fenchel transform 
of $\mathscr{P}$. Condition \textit{(3')} is then employed in \cite[Prop 2.2]{MR753071}   
to show that the entropy of any shift-invariant probability measure $\mu$ is bounded 
by $0\leq \mathfrak{h}(\mu)\leq P(0)$.
Although the results in \cite{MR753071} can be applied in several contexts, 
condition \textit{(3')} does not hold in general in Statistical Mechanics and 
Thermodynamic Formalism. For example, the specific entropy
considered in \cite{MR2807681} is not bounded from below. 
Actually, it is well-known in Statistical Mechanics that 
the ground state entropy can go to minus infinity for uncountable 
(even compact) spin spaces.

If $X=E^{\mathbb{N}}$,
where $E$ is an uncountable infinite compact metric space, 
the entropy considered in \cite{MR3377291} does neither satisfy \textit{(3)} nor \textit{(3')},
and the authors show that their entropy of a Dirac measure concentrated on a periodic 
orbit is not finite, see the remark to Proposition 5 in page 1939 in \cite{MR3377291}.
Note that the pressure functional introduced above in Definition \ref{def-top-pressure}
is another instance where condition \textit{(3')} might not hold. 
We also remark that in here, $X$ is not necessarily compact.

Our pressure functional, likewise in classical 
equilibrium Statistical Mechanics, depends on the Ruelle operator, 
which in turn depends on the \textit{a priori} measure $p$, so
the reader should keep in mind that our 
pressure functional is a $p$-dependent concept as well as will be
our concept of entropy. 
It is also worth noting that by taking a suitable 
\textit{a priori} measure, we recover the usual concept of topological pressure in 
a finite-alphabet setting.

\begin{definition}[Equilibrium States]
\label{def-eq-states}
Given a continuous potential $f\in C_{b}(X,\mathbb{R})$, we 
say that $\mu\in \mathscr{M}^{a}_{\sigma}(X)$
is a (generalized) equilibrium state for $f$ if 
\[
\mathrm{h}^{\mathrm{v}}(\mu) + \mu(f)
=
\sup_{\mu\in \mathscr{M}^{a}_{\sigma}(X)} 
\mathrm{h}^{\mathrm{v}}(\mu) +\langle\mu,f\rangle
\equiv 
P(f).
\]
The set of all equilibrium states for $f$  will be denoted
by $\mathrm{Eq}(f)$.
\end{definition}

\begin{theorem}\label{teo-exist-equi}
Given a continuous potential $f\in C_{b}(X,\mathbb{R})$ there 
is $\mu_{f}\in \mathscr{M}^{a}_{\sigma}(X)$
such that 
\[
\mathrm{h}^{\mathrm{v}}(\mu_{f}) + \langle\mu_{f},f\rangle
=
\sup_{\mu\in \mathscr{M}^{a}_{\sigma}(X)} 
\mathrm{h}^{\mathrm{v}}(\mu) +\langle\mu,f\rangle.
\]
\end{theorem}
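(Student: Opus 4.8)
The plan is to recognise the target functional $\Phi(\mu) \equiv \mathrm{h}^{\mathrm{v}}(\mu) + \langle\mu,f\rangle$ as a weak-$*$ upper semi-continuous, bounded-above function on the weak-$*$-compact convex set $\mathscr{M}^{a}_{\sigma}(X)$, and then to invoke the standard fact that such a function attains its supremum on a compact domain. All the structural input is already packaged in the variational formula \eqref{def-entropy} together with the compactness of $\mathscr{M}^{a}_{\sigma}(X)$ established just before Definition \ref{def-top-pressure}.

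First I would establish the upper semi-continuity of the entropy. For each fixed $g \in \mathrm{Hol}(\alpha)$ the map $\mu \mapsto -\langle\mu,g\rangle + \log\lambda_{g}$ is affine in $\mu$ and, since $g \in \mathrm{Hol}(\alpha) \subset C_{b}(X,\mathbb{R})$, it is continuous for the weak-$*$ topology on $rba(X)$; the constant $\log\lambda_{g}$ does not affect this. By \eqref{def-entropy}, $\mathrm{h}^{\mathrm{v}}$ is the pointwise infimum of this family of continuous affine functionals, hence weak-$*$ upper semi-continuous (and, incidentally, concave). Adding the weak-$*$-continuous term $\mu \mapsto \langle\mu,f\rangle$, which is continuous because $f \in C_{b}(X,\mathbb{R})$, preserves upper semi-continuity, so $\Phi$ is upper semi-continuous. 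Next I would check that $\Phi$ is bounded above: taking $g = 0$ in \eqref{def-entropy} and using that $p$ is a probability measure, so that $\mathscr{L}_{0}1 = 1$ and thus $0$ is normalized with eigenvalue $\lambda_{0} = 1$, yields $\mathrm{h}^{\mathrm{v}}(\mu) \leq -\langle\mu,0\rangle + \log 1 = 0$ for every $\mu$. Consequently $\Phi(\mu) \leq \|f\|_{\infty}$ for all $\mu \in \mathscr{M}^{a}_{\sigma}(X)$, so in particular $P(f) < \infty$; moreover the invariant measure produced in Section \ref{sec-RPF} lies in $\mathscr{G}$ and has finite entropy, so $P(f) > -\infty$ and $\Phi$ is not identically $-\infty$.

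Finally, I would conclude by the compactness argument. Since $\mathscr{M}^{a}_{\sigma}(X)$ is weak-$*$ compact, choose a net $(\mu_{d})_{d\in D}$ in $\mathscr{M}^{a}_{\sigma}(X)$ with $\Phi(\mu_{d}) \to P(f)$, pass to a weak-$*$ convergent subnet $\mu_{d'} \to \mu_{f} \in \mathscr{M}^{a}_{\sigma}(X)$, and apply upper semi-continuity to obtain $\Phi(\mu_{f}) \geq \limsup_{d'} \Phi(\mu_{d'}) = P(f)$; the reverse inequality holds by the very definition of $P(f)$, giving $\Phi(\mu_{f}) = P(f)$. Equivalently, one may note that the superlevel sets $\{\mu : \Phi(\mu) \geq c\}$ are closed and nested as $c \uparrow P(f)$ and invoke the finite intersection property. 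The only genuinely substantive point is the upper semi-continuity of $\mathrm{h}^{\mathrm{v}}$, and this is exactly where the representation \eqref{def-entropy} as an infimum of continuous affine functionals does all the work; everything else is a routine weak-$*$ compactness argument, which is why the generalisation to finitely additive measures costs nothing here beyond having the right compact set to work on.
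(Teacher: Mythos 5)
Your proof is correct and follows essentially the same route as the paper's: upper semi-continuity of $\mu \mapsto \mathrm{h}^{\mathrm{v}}(\mu) + \langle\mu,f\rangle$ deduced from the infimum representation \eqref{def-entropy}, combined with the weak-$*$ compactness of $\mathscr{M}^{a}_{\sigma}(X)$ established before Definition \ref{def-top-pressure}. The only difference is in the final invocation: the paper appeals to the Bauer maximum principle (thereby also claiming the maximizer can be taken extreme in $\mathscr{M}^{a}_{\sigma}(X)$), whereas you use the elementary net/subnet Weierstrass argument, which fully suffices for the statement as given.
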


\begin{proof}
From Definition \ref{def-entropy} follows that the mapping 
$
\mathscr{M}^{a}_{\sigma}(X)\ni \mu
\longmapsto 
\mathrm{h}^{\mathrm{v}}(\mu) + \mu(f)
$
is upper semi-continuous with respect to the weak-$*$-topology. 
Since $\mathscr{M}^{a}_{\sigma}(X)$ is compact and 
convex follows from the Bauer maximum principle 
that there exists some $\mu_{f}\in \mathscr{M}^{a}_{\sigma}(X)$ 
such that 
\[
\mathrm{h}^{\mathrm{v}}(\mu_{f}) +\mu_{f}(f)
=
\sup_{ \mu\in \mathscr{M}^{a}_{\sigma}(X) } 
\mathrm{h}^{\mathrm{v}}(\mu) +\mu(f).
\]
Moreover, the Bauer maximum principle ensures that we 
can take the finitely additive measure $\mu_{f}$, attaining the above supremum,
in such a way that $\mu_{f}$ is in the set of extreme points of $\mathscr{M}^{a}_{\sigma}(X)$.
\end{proof}

An equilibrium state $\mu_{f}$ as in
the previous theorem is not necessarily a countably additive measure.
On the other hand, the Yosida-Hewitt decomposition 
\cite[Theorem 1.23]{MR0045194} states that 
$\mu_{f}= (\mu_{f})_{c}+ (\mu_{f})_{a}$, where $(\mu_{f})_{c}$ is 
a non-negative countably additive measure and $(\mu_{f})_{a}$ is a
non-negative purely finitely additive measure. 
That is, if $\mu$ is a non-negative countably additive measure such 
that $\mu\leq (\mu_{f})_{a}$, then $\mu=0$.

At this point, we do not have complete information on how the regularity properties or
the shape of the graph of the potential are linked to this decomposition 
this seems to be a relevant and interesting problem.
On the other hand, we can prove other important properties about the set
$\mathrm{Eq}(f)$ consisting of all equilibrium states associated to a bounded 
continuous potential $f$. 

If $p=P|_{\mathrm{Hol}(\alpha)}$ then Theorem \ref{teo-exist-equi} ensures that the 
subdifferential 
\[
\partial p(f)
\equiv  
\{\mu \in rba(X): p(g)\geq p(f)+\langle\mu,g-f\rangle, \ \forall g\in \mathrm{Hol}(\alpha) \},
\]
at every $f\in \mathrm{Hol}(\alpha)$ is not empty and 
it is easy to see that $\mathrm{Eq}(f)= \partial p(f)$.
The next proposition  is  a trivial observation showing
that the restriction of $\mathrm{h}^{\mathrm{v}}$ to a subdifferential
$\partial p(f)$ at any $f\in \mathrm{Hol}(\alpha)$ is an affine function.

\begin{proposition}\label{prop-subdiff-and-affinity-entropy}
Let $f\in \mathrm{Hol}(\alpha)$ be a given potential and 
$\partial p(f)$ the subdifferential of $p$ at $f$.
Then the restriction $\mathrm{h}^{\mathrm{v}}|_{\partial p(f)}$ is 
an affine function. In particular, any $\mu\in \partial p(f)$ is an equilibrium
state for $f$.
\end{proposition}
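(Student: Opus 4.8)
The plan is to reduce the whole statement to the single identity
\[
\mathrm{h}^{\mathrm{v}}(\mu) = p(f) - \langle \mu, f\rangle \qquad \text{for every } \mu \in \partial p(f).
\]
Granting this, both assertions are immediate. The right-hand side is a fixed constant $p(f)$ minus the weak-$*$-continuous linear functional $\mu \mapsto \langle\mu, f\rangle$, so its restriction to the (convex, weak-$*$-compact) set $\partial p(f)$ is an affine function, which is the main claim. Moreover the identity rearranges to $\mathrm{h}^{\mathrm{v}}(\mu) + \langle\mu,f\rangle = p(f) = P(f)$, which is exactly Definition \ref{def-eq-states}, so every $\mu \in \partial p(f)$ is an equilibrium state, giving the ``in particular''. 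Thus everything rests on the two opposite inequalities defining this identity.

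For the inequality $p(f) - \langle\mu,f\rangle \leq \mathrm{h}^{\mathrm{v}}(\mu)$, I start from the defining subdifferential inequality $p(g) \geq p(f) + \langle\mu, g-f\rangle$, valid for all $g \in \mathrm{Hol}(\alpha)$, which rearranges to $p(f) - \langle\mu,f\rangle \leq p(g) - \langle\mu, g\rangle$. Taking the infimum over $g \in \mathrm{Hol}(\alpha)$ and using the variational identity $p(g) = \log\lambda_g$ for Hölder potentials (its ``$\leq$'' part is immediate from \eqref{def-entropy} and its ``$\geq$'' part from the Perron--Frobenius--Ruelle eigenmeasure $h\,d\nu \in \mathscr{G}$ of Section \ref{sec-RPF}), the right-hand infimum is precisely $\mathrm{h}^{\mathrm{v}}(\mu) = \inf_{g}\,[\log\lambda_g - \langle\mu,g\rangle]$ as in \eqref{def-entropy}. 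This yields $p(f) - \langle\mu,f\rangle \leq \mathrm{h}^{\mathrm{v}}(\mu)$.

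The reverse inequality requires first checking that $\mu$ genuinely belongs to $\mathscr{M}^a_\sigma(X)$, which I expect to be the only delicate step. Feeding well-chosen test potentials into the subdifferential inequality and invoking the elementary properties of $p$ that follow from its variational definition forces the three defining conditions: taking $g = f + c$ and letting $c$ range over $\mathbb{R}$, together with $p(g+c)=p(g)+c$, forces $\mu(1)=1$; taking $g = f + t(u - u\circ\sigma)$ with $t \in \mathbb{R}$, together with invariance of $p$ under coboundaries, forces $\langle\mu, u - u\circ\sigma\rangle = 0$ for all $u \in \mathrm{Hol}(\alpha)$; and taking $g = f + t\psi$ with $\psi \geq 0$ and $t<0$, together with monotonicity of $p$, forces $\langle\mu,\psi\rangle \geq 0$. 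Since $\mathrm{Hol}(\alpha)$ is dense in $C_b(X,\mathbb{R})$ and $\mu$ is weak-$*$-continuous, these extend to all bounded continuous test functions, so $\mu \in \mathscr{M}^a_\sigma(X)$.

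Once this membership is established, the definition of $p$ as the supremum over $\mathscr{M}^a_\sigma(X)$ gives at once $\mathrm{h}^{\mathrm{v}}(\mu) + \langle\mu,f\rangle \leq p(f)$, that is $\mathrm{h}^{\mathrm{v}}(\mu) \leq p(f) - \langle\mu,f\rangle$. Combining the two inequalities produces the identity $\mathrm{h}^{\mathrm{v}}(\mu) = p(f) - \langle\mu,f\rangle$ on $\partial p(f)$, and the affineness together with the equilibrium-state conclusion follow formally as explained in the first paragraph. The genuine content is therefore the verification $\partial p(f) \subseteq \mathscr{M}^a_\sigma(X)$; the rest is a direct consequence of the resulting linear expression for $\mathrm{h}^{\mathrm{v}}$ on the subdifferential.
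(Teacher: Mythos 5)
Your first inequality reproduces the paper's core argument: the subgradient inequality rearranges to $p(f)-\langle\mu,f\rangle\le p(g)-\langle\mu,g\rangle$ for all $g\in\mathrm{Hol}(\alpha)$, and after identifying $p(g)=\log\lambda_g$ (a point you justify more carefully than the paper, which uses it silently) the infimum of the right-hand side over $g$ is $\mathrm{h}^{\mathrm{v}}(\mu)$ by \eqref{def-entropy}. The genuine gap is in your route to the reverse inequality, namely the claim that $\mathrm{Hol}(\alpha)$ is dense in $C_{b}(X,\mathbb{R})$. In the generality of this paper $X$ need not be compact, and then bounded H\"older functions are in general \emph{not} uniformly dense in $C_{b}(X,\mathbb{R})$: for $E=\mathbb{R}$, the function $x\mapsto\sin(x_1^2)$ has uniform distance at least $1$ from every H\"older function, since it performs full oscillations on slices of arbitrarily small $d_X$-diameter. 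The paper itself flags exactly this failure of denseness in Section \ref{sec-concluding-remarks} (remarks on uniqueness). Worse, the defect is not repairable: whenever $\overline{\mathrm{Hol}(\alpha)}\subsetneq C_{b}(X,\mathbb{R})$, Hahn--Banach provides a nonzero $\ell\in rba(X)$ annihilating $\overline{\mathrm{Hol}(\alpha)}$; since the subdifferential conditions only test H\"older directions, $\mu+t\ell\in\partial p(f)$ for every $t\in\mathbb{R}$, and because $\ell(1)=0$ one can choose $\varphi_0\ge 0$ in $C_{b}(X,\mathbb{R})$ with $\ell(\varphi_0)\neq 0$, so a suitable $t$ makes $\langle\mu+t\ell,\varphi_0\rangle<0$. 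Hence the inclusion $\partial p(f)\subseteq\mathscr{M}^{a}_{\sigma}(X)$ you aim at is actually false for non-compact $X$, not merely unproved (the same perturbation shows $\partial p(f)$ need not be bounded, so your parenthetical weak-$*$ compactness claim fails as well).

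Fortunately, the detour through $\mathscr{M}^{a}_{\sigma}(X)$ is unnecessary for the main assertion: since $f$ itself is a competitor in the infimum \eqref{def-entropy}, one gets directly $\mathrm{h}^{\mathrm{v}}(\mu)\le\log\lambda_f-\langle\mu,f\rangle=p(f)-\langle\mu,f\rangle$, using the same identification $p(f)=\log\lambda_f$ you already established. This is precisely the paper's one-line proof, and it yields the affine identity $\mathrm{h}^{\mathrm{v}}(\mu)=p(f)-\langle\mu,f\rangle$ on all of $\partial p(f)$ with no positivity or invariance considerations whatsoever. What remains is your correct observation that the ``in particular'' clause, read literally against Definition \ref{def-eq-states}, requires $\mu\in\mathscr{M}^{a}_{\sigma}(X)$: the paper glosses over this point, and your instinct to check it was sound, but the density argument cannot close it, and by the perturbation above nothing can close it for \emph{every} element of $\partial p(f)$; at best one concludes that those $\mu\in\partial p(f)$ which do lie in $\mathscr{M}^{a}_{\sigma}(X)$ satisfy $\mathrm{h}^{\mathrm{v}}(\mu)+\langle\mu,f\rangle=P(f)$ and hence are equilibrium states.
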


\begin{proof} From the definition for any $\mu\in \partial p(f)$ we 
have $p(f)-\langle\mu,f\rangle \leq p(g)-\langle\mu,g\rangle$
for all $g\in \mathrm{Hol}(\alpha)$. Therefore
\[	
\partial p(f)\ni \mu\longmapsto
\mathrm{h}^{\mathrm{v}}(\mu)
=
\inf_{g\in \mathrm{Hol}(\alpha)}
p(g)-\langle\mu,g\rangle
=
p(f)-\langle\mu,f\rangle.
\qedhere
\]	
\end{proof}

\begin{proposition}
For any $f\in C_{b}(X,\mathbb{R})$ we have that $\mathrm{Eq}(f)$
is a compact and convex subspace of $\mathscr{M}_{\sigma}^{a}(X)$.
\end{proposition}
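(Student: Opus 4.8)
The plan is to exploit the representation of the variational entropy as an infimum of affine functionals, together with the weak-$*$-compactness of $\mathscr{M}_{\sigma}^{a}(X)$ established above. Writing $F(\mu) \equiv \mathrm{h}^{\mathrm{v}}(\mu) + \langle\mu,f\rangle$, I would first record the elementary reformulation
\[
\mathrm{Eq}(f) = \{\mu\in \mathscr{M}_{\sigma}^{a}(X): F(\mu)\geq P(f)\},
\]
which holds because $F(\mu)\leq P(f)$ for every $\mu\in \mathscr{M}_{\sigma}^{a}(X)$ by the very definition of the pressure, so a measure is an equilibrium state precisely when the reverse inequality also holds.

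For convexity the key observation is that $\mathrm{h}^{\mathrm{v}}$ is \emph{concave} on $rba(X)$. Indeed, by \eqref{def-entropy} it is the pointwise infimum of the family of affine maps $\mu\mapsto -\langle\mu,g\rangle + \log\lambda_{g}$ indexed by $g\in \mathrm{Hol}(\alpha)$, and an infimum of affine maps is concave. Hence $F$, being the sum of the concave map $\mathrm{h}^{\mathrm{v}}$ and the linear map $\mu\mapsto \langle\mu,f\rangle$, is concave. Given $\mu_{0},\mu_{1}\in \mathrm{Eq}(f)$ and $t\in[0,1]$, the convex combination $\mu_{t}\equiv t\mu_{1}+(1-t)\mu_{0}$ again lies in $\mathscr{M}_{\sigma}^{a}(X)$ by the convexity of that set, and concavity gives $F(\mu_{t})\geq tF(\mu_{1})+(1-t)F(\mu_{0}) = P(f)$; combined with the universal bound $F(\mu_{t})\leq P(f)$, this forces $F(\mu_{t})=P(f)$, so $\mu_{t}\in \mathrm{Eq}(f)$.

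For compactness I would use that $\mathrm{h}^{\mathrm{v}}$ is additionally upper semi-continuous in the weak-$*$-topology, being an infimum of weak-$*$-continuous functions (each $\mu\mapsto -\langle\mu,g\rangle + \log\lambda_{g}$ is weak-$*$-continuous, as $g\in C_{b}(X,\mathbb{R})$). Consequently $F$ is upper semi-continuous, so its superlevel set $\{\mu: F(\mu)\geq P(f)\}$ is closed. Intersecting with the weak-$*$-compact set $\mathscr{M}_{\sigma}^{a}(X)$ exhibits $\mathrm{Eq}(f)$ as a closed subset of a compact set, hence compact.

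There is no serious obstacle here; the content is entirely structural and rests on the two facts already invoked implicitly in the proof of Theorem \ref{teo-exist-equi}, namely that $\mathrm{h}^{\mathrm{v}}$ is simultaneously concave and upper semi-continuous as an infimum of affine weak-$*$-continuous functionals. The only point demanding mild care is to keep in mind that these properties are inherited by the extended entropy defined on all of $rba(X)$ through \eqref{def-entropy}, rather than merely on the subclass $\mathscr{G}$.
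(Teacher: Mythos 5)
Your proposal is correct and follows essentially the same route as the paper: convexity via concavity of $\mathrm{h}^{\mathrm{v}}$ obtained from the infimum-of-affine-maps representation \eqref{def-entropy}, and compactness via weak-$*$ upper semi-continuity of $\mathrm{h}^{\mathrm{v}}$ together with the compactness of $\mathscr{M}_{\sigma}^{a}(X)$. The only difference is that you spell out details the paper leaves implicit (the superlevel-set reformulation and the closedness of superlevel sets of u.s.c.\ functions), which is harmless.
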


\begin{proof}
Let $\mu,\nu\in \mathrm{Eq}(f)$ and $\lambda\in [0,1]$. 
From elementary properties of the infimum and \eqref{def-entropy} 
follows that $\mathrm{h}^{\mathrm{v}}$ is concave function. Hence,
\begin{multline*}
\mathrm{h}^{\mathrm{v}}(\lambda\mu+(1-\lambda)\nu) 
+\langle\lambda\mu+(1-\lambda)\nu,f\rangle 
\\
\geq
\lambda\mathrm{h}^{\mathrm{v}}(\mu) 
+
(1-\lambda)\mathrm{h}^{\mathrm{v}}(\nu) 
+\lambda\langle\mu,f\rangle+(1-\lambda)\langle\nu,f\rangle
=
P(f),
\end{multline*}
thus proving that $\mathrm{Eq}(f)$ is a convex set. 
The compactness of $\mathrm{Eq}(f)$ follows from compactness 
of $\mathscr{M}_{\sigma}^{a}(X)$ and the upper semi-continuity
of $\mathrm{h}^{\mathrm{v}}$.
\end{proof}

\begin{remark}
It follows from the last proposition and the Krein-Milman theorem that
the set of extreme points of $\mathrm{Eq}(f)$, denoted by $\mathrm{ex}(\mathrm{Eq}(f))$,
is not-empty. In particular, it is natural to conjecture that any element 
in $\mathrm{ex}(\mathrm{Eq}(f))$ is an ergodic finitely additive measure. 
This usually is established by 
showing that  
$\mathrm{ex}(\mathrm{Eq}(f))=\mathrm{Eq}(f)\cap \mathrm{ex}(\mathscr{M}_{\sigma}^{a}(X))$ 
using that the 
the entropy is an affine continuous function
on $\mathscr{M}_{\sigma}^{a}(X)$.
However, this approach does not work in our setting for general \textit{a priori} 
measures and non-compact spaces
as $\mathrm{h}^{\mathrm{v}}$ restricted to $\mathscr{M}^{a}_{\sigma}(X)$
might no longer be affine. Actually, $\mathscr{M}^{a}_{\sigma}(X)$ 
contains infinitely many elements 
whose entropy is equal to minus infinity.  
Of course, in particular cases, e.g. if the potential is  
H\"older continuous, there are other techniques
 to establish the ergodicity of the extreme equilibrium states.
\end{remark}

\begin{remark}
Since $(C_{b}(X,\mathbb{R}) , rba(X))$ is a dual pair and $P$ 
is a proper convex function (that is, if its effective domain is nonempty
and $P$ never takes  the value $-\infty$) it follows from Corollary 7.17
in \cite{MR2378491} that $\mathrm{Eq}(f)$ is a  singleton if and only if $P$ is G\^ateaux 
differentiable at $f$. The differentiability of the pressure restricted to $\mathrm{Hol}(\alpha)$
was recently obtained  when $X$ is compact (see \cite{2017arXiv170709072S,MR3194082}) and is a classical
result for finite alphabets, see for example 
\cite{MR1793194,MR1085356,MR0234697,MR0390180,MR0466493}.
\end{remark}

On the other hand, if the potential is H\"older continuous, then the 
following result shows that $h d\nu$, with $h$ and   
$\nu$ as in Section \ref{sec-RPF}, is a countably 
additive equilibrium state.

\begin{theorem}\label{teo-equi-measure-f-holder}
	Let $f$ be a bounded H\"older potential. Then 
	there is at least one equilibrium state $\mu_{f}$, associated to $f$, such that 
	its Yosida-Hewitt decomposition has only the countably additive part.
	More precisely, this equilibrium state is given by $\mu_{f}=h\nu$, where $h$ is
	a suitable normalized eigenfunction associated to $\lambda_{f}$
	and $\nu$ is the eigenmeasure of the dual of the Ruelle operator.
\end{theorem}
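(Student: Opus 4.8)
The plan is to verify directly that the countably additive measure $\mu_{f}\equiv h\nu$, defined by $\langle \mu_{f},\varphi\rangle = \int \varphi\, h\, d\nu$, lies in $\mathscr{M}_{\sigma}^{a}(X)$, to compute its free energy $\mathrm{h}^{\mathrm{v}}(\mu_{f})+\langle \mu_{f},f\rangle$ explicitly, and to match this value against a universal upper bound for the pressure. Since $\nu\in\mathscr{M}_{1}(X)$ is countably additive and $h$ is bounded (hence $\nu$-integrable) and bounded away from $0$ and $\infty$, the set function $A\mapsto \int_{A}h\,d\nu$ is countably additive and, by the normalization $\nu(h)=1$, a probability measure. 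Consequently the Yosida--Hewitt decomposition of $\mu_{f}$ reduces to its countably additive part, which settles the last assertion of the theorem once $\mu_{f}$ is shown to be an equilibrium state.

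First I would introduce the normalized potential $g\equiv f+\log h-\log(h\circ\sigma)-\log\lambda$, where $\lambda$ and $h$ are the eigenvalue $\lambda_{f}$ and normalized eigenfunction from Section \ref{sec-RPF}. Because $h$ is H\"older and bounded away from $0$ and $\infty$ and $\sigma$ is Lipschitz for $d_{X}$, one has $g\in\mathrm{Hol}(\alpha)$; a short computation using $\sigma(ax)=x$ and $\mathscr{L}_{f}h=\lambda h$ gives $\mathscr{L}_{g}1=1$. The same manipulation shows $h\cdot\mathscr{L}_{g}\varphi=\lambda^{-1}\mathscr{L}_{f}(h\varphi)$, whence $\mathscr{L}_{g}^{*}\mu_{f}=\mu_{f}$ follows from conformality of $\nu$. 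From $\mathscr{L}_{g}1=1$ one obtains the transfer identity $\mathscr{L}_{g}(\varphi\circ\sigma)=\varphi$, and combining it with $\mathscr{L}_{g}^{*}\mu_{f}=\mu_{f}$ yields $\langle\mu_{f},\varphi\circ\sigma\rangle=\langle\mu_{f},\varphi\rangle$. Hence $\mu_{f}$ is shift-invariant and belongs to $\mathscr{M}_{\sigma}^{a}(X)\cap\mathscr{G}$.

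Since $\mu_{f}\in\mathscr{G}$ with associated normalized potential $g$, by definition the entropy equals $\mathrm{h}^{\mathrm{v}}(\mu_{f})=-\langle\mu_{f},g\rangle$. Expanding $g$ and using shift-invariance of $\mu_{f}$ to cancel the coboundary term $\log h-\log(h\circ\sigma)$ (both $\log h$ and its composition with $\sigma$ are bounded and continuous, hence $\mu_{f}$-integrable), this collapses to $\mathrm{h}^{\mathrm{v}}(\mu_{f})=-\langle\mu_{f},f\rangle+\log\lambda$, so that $\mathrm{h}^{\mathrm{v}}(\mu_{f})+\langle\mu_{f},f\rangle=\log\lambda$. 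For the matching upper bound I would use the variational formula \eqref{def-entropy}, valid for every $\mu\in rba(X)$: choosing the competitor $g'=f$ in the infimum gives $\mathrm{h}^{\mathrm{v}}(\mu)\leq-\langle\mu,f\rangle+\log\lambda$, i.e. $\mathrm{h}^{\mathrm{v}}(\mu)+\langle\mu,f\rangle\leq\log\lambda$ for all $\mu\in\mathscr{M}_{\sigma}^{a}(X)$. Therefore $P(f)=\log\lambda$ is attained precisely by $\mu_{f}$, the desired countably additive equilibrium state.

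I expect the only genuinely delicate point to be the bookkeeping that places $\mu_{f}$ in $\mathscr{G}$ --- that is, checking that $g$ is simultaneously H\"older and normalized and that $\mathscr{L}_{g}^{*}\mu_{f}=\mu_{f}$ --- since the clean collapse of the entropy to $-\langle\mu_{f},f\rangle+\log\lambda$ and the upper bound via \eqref{def-entropy} are then essentially formal. Everything else, namely the countable additivity of $h\nu$, the cancellation of the coboundary term under shift-invariance, and the inequality obtained by inserting $f$ into the infimum, is routine.
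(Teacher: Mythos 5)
Your proposal is correct and follows essentially the same route as the paper's proof: both pass to the normalized cohomologous potential $\bar f = f+\log h-\log(h\circ\sigma)-\log\lambda_f$, verify that $\mu_f=h\nu$ satisfies $\mathscr{L}_{\bar f}^{*}\mu_f=\mu_f$ so that $\mu_f\in\mathscr{G}\cap\mathscr{M}_{\sigma}^{a}(X)$ with $\mathrm{h}^{\mathrm{v}}(\mu_f)+\langle\mu_f,f\rangle=\log\lambda_f$, and obtain the matching upper bound $P(f)\leq\log\lambda_f$ by inserting $g=f$ into the infimum in \eqref{def-entropy}. The only difference is that you spell out details the paper leaves implicit (the transfer identity $\mathscr{L}_{\bar f}(\varphi\circ\sigma)=\varphi$ giving shift-invariance, and the coboundary cancellation), which is fine.
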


\begin{proof}
Let $f$ be a H\"older potential. By the definition of entropy we have 
\begin{eqnarray*}
\sup_{ \mu\in \mathscr{M}^{a}_{\sigma}(X) } 
\mathrm{h}^{\mathrm{v}}(\mu) +\langle \mu,f\rangle 
&=&
\sup_{ \mu\in \mathscr{M}^{a}_{\sigma}(X) } 
\big[ \inf_{g\in \mathrm{Hol}(\alpha)} -\langle \mu,g\rangle + \log \lambda_{g}\big] +\langle \mu,f\rangle
\\
&\leq&
\sup_{ \mu\in \mathscr{M}^{a}_{\sigma}(X) } 
-\langle \mu,f\rangle + \log \lambda_{f} +\langle \mu,f\rangle
\\
&=&
\log \lambda_{f}.
\end{eqnarray*}

Since $f$ is a H\"older potential, we can use the Perron-Frobenius-Ruelle Theorem of Section 
\ref{sec-RPF} to find a normalized potential $\bar{f}\in \mathrm{Hol}(\alpha)$ cohomologous to $f$, that is, 
$\bar{f}=f+\log h-\log h\circ\sigma -\log \lambda_{f}$. 
It is easy to see that $h$, up to a positive constant, 
can be chosen so that $\mu_{f}\equiv h\nu \in \mathscr{M}_{1}(X)$
and $\mathscr{L}_{\bar{f}}^{*}\mu_{f}=\mu_{f}$. Therefore 
$\mu\in\mathscr{G}\cap \mathscr{M}^{a}_{\sigma}(X)$ and by definition we have
$\mathrm{h}^{\mathrm{v}}(\mu_f) = -\langle \mu_{f}, \bar{f}\rangle = -\langle \mu_{f}, f\rangle +\log\lambda_{f}$.
This equality implies 
\[
\log \lambda_{f} = \mathrm{h}^{\mathrm{v}}(\mu_f) + \langle \mu, f\rangle
\leq 
\sup_{ \mu\in \mathscr{M}^{a}_{\sigma}(X) } 
\mathrm{h}^{\mathrm{v}}(\mu) +\langle \mu,f\rangle
\]
which together with the last inequality ensures that $\mu_{f}$ is an equilibrium state.
\end{proof}

\section{Extreme Positive $rba(X)$ measures in the\break Closed Unit Ball are uniquely Maximizing}\label{sec:extreme_positive}

The aim of this section is to obtain a result similar to 
the main result of \cite{MR2226487} in a non-compact setting. 
The techniques developed in \cite{MR2226487} are not applicable here mainly 
because $C_{b}(X,\mathbb{R})$ may not be separable and the induced 
weak-$*$-topology on the closed unit ball of its dual is not necessarily metrizable.

\begin{theorem}\label{Functional}
 Let $S$ be an arbitrary topological space,  $C_b(S,\mathbb{R})$ denote 
 the Banach space of all real-valued bounded continuous functions on $S$ 
 endowed with the supremum norm, and $\mathscr{M}^{a}_{1}(S)$ the subset of the 
 topological dual $C_b(S,\mathbb{R})^{*}$, consisting of those functionals 
 which have norm one and are  mapping the positive cone 
 $C_b(S,\mathbb{R})_+$ into $[0,\infty)$. 
 For each $\mu \in \mathscr{M}^{a}_{1}(S)$, the following assertions are equivalent.
 \begin{itemize}
\item[i)] $\mu$ is an extreme point of $\mathscr{M}^{a}_{1}(S)$, i.e. $\mu$ 
can not be written as a convex combination of two 
functionals in $\mathscr{M}^{a}_{1}(S) \setminus \{\mu\}$.

\item[ii)] $\mu$ is an \emph{exposed point} of $\mathscr{M}^{a}_{1}(S)$, that is, 
there exists a functional $\xi$ in the bi-dual $C_b(S,\mathbb{R})^{**}$ 
which attains its strict minimum on the set $\mathscr{M}^{a}_{1}(S)$ 
at the point $\mu$.

\item[iii)] There exists a functional $\xi$ in the bi-dual 
$C_b(S,\mathbb{R})^{**}$ which is zero at $\mu$ and strictly positive on $\mathscr{M}^{a}_{1}(S) \setminus \{\mu\}$.

\item[iv)] $\mu$ is a \emph{lattice homomorphism}, 
i.e. we have $|\langle \mu, f\rangle| = \langle \mu, |f| \rangle$ 
for all $f \in C_b(S, \mathbb{R})$.

\item[v)] $\mu$ is an algebra homomorphism, 
i.e. we have 
$\langle \mu, f_1f_2 \rangle = \langle \mu, f_1 \rangle \langle \mu, f_2 \rangle$ 
for all $f_1,f_2 \in C_b(S,\mathbb{R})$.
\end{itemize}

\end{theorem}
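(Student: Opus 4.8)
The plan is to linearize the whole problem through the Gelfand--Kakutani representation. Since $C_b(S,\mathbb{R})$ is a commutative unital Banach algebra which is at the same time an AM-space with order unit $1$, it is isometrically isomorphic, as a Banach lattice and algebra, to $C(K,\mathbb{R})$ for a suitable compact Hausdorff space $K$ (the spectrum of $C_b(S,\mathbb{R})$; one may take $K=\beta S$ when $S$ is Tychonoff). Under this identification the positive norm-one functionals $\mathscr{M}^{a}_{1}(S)$ correspond, by the Riesz--Markov theorem, exactly to the regular Borel probability measures on $K$, which I denote $\mathscr{M}_1(K)$; here I use that a positive functional $\mu$ on a unital space satisfies $\|\mu\|=\langle\mu,1\rangle$, so norm one is the same as $\langle\mu,1\rangle=1$. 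Each of the five conditions then becomes a statement about $\mathscr{M}_1(K)$, and the theorem reduces to the classical fact that the distinguished measures are precisely the Dirac masses $\delta_x$, $x\in K$.

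With this dictionary the equivalences (i)$\Leftrightarrow$(iv)$\Leftrightarrow$(v) are classical. A state is multiplicative precisely when it is a nonzero algebra homomorphism of $C(K,\mathbb{R})$, i.e.\ a character, and the characters of $C(K,\mathbb{R})$ are exactly the point evaluations $\delta_x$, which gives (v)$\Leftrightarrow[\mu=\delta_x]$; likewise a positive unital lattice (Riesz) homomorphism is a point evaluation by Kakutani's representation, giving (iv)$\Leftrightarrow[\mu=\delta_x]$. If $\mu$ is not a Dirac mass there is a Borel set $A\subset K$ with $0<\mu(A)<1$, and the conditional measures give a proper decomposition $\mu=\mu(A)\,\mu_A+(1-\mu(A))\,\mu_{A^{c}}$ into distinct states, so $\mu$ is not extreme, whence (i)$\Leftrightarrow[\mu=\delta_x]$ too. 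For a reader wishing to avoid the representation, (i)$\Rightarrow$(v) also follows directly by state-splitting (for $0\le a\le 1$ with $0<\langle\mu,a\rangle<1$, the two states $x\mapsto\langle\mu,ax\rangle/\langle\mu,a\rangle$ and its complement average to $\mu$, forcing $\langle\mu,ax\rangle=\langle\mu,a\rangle\langle\mu,x\rangle$), and (v)$\Rightarrow$(iv) from $\langle\mu,|f|\rangle^{2}=\langle\mu,f^{2}\rangle=\langle\mu,f\rangle^{2}$.

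The conditions (ii) and (iii) are where the non-compactness and possible non-metrizability enter, and this is the step I expect to be the main obstacle. The formal parts are easy: if $\xi$ exposes $\mu$ with minimum value $c$ then $\xi-c\,\widehat{1}$, with $\widehat{1}$ the canonical image of the unit, realizes (iii) because $\langle\mu,1\rangle=1$, and conversely a functional as in (iii) attains its strict minimum at $\mu$, while a strict minimizer of a linear functional is never a proper convex combination and is therefore extreme; this yields (iii)$\Rightarrow$(ii)$\Rightarrow$(i). The real content is to exhibit a functional from the \emph{bidual} $C_b(S,\mathbb{R})^{**}\cong C(K,\mathbb{R})^{**}$ that exposes a Dirac mass, the subtlety being that one cannot in general expose $\delta_x$ by an element of the predual $C(K,\mathbb{R})$, as that would require a continuous function with unique minimizer $x$, which need not exist when $\{x\}$ fails to be a zero-set (exactly the obstruction that blocks the metrizability-based approach of \cite{MR2226487}). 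The escape is that every bounded Borel function $b\colon K\to\mathbb{R}$ defines an element $\xi_b\in C(K,\mathbb{R})^{**}$ by $\langle\xi_b,\nu\rangle\equiv\int_K b\,d\nu$, since this is linear in $\nu$ and bounded by $\|b\|_\infty\|\nu\|_{TV}$ on $C(K,\mathbb{R})^{*}$. Taking $b=\mathbf{1}_{K\setminus\{x\}}$, the indicator of the complement of the closed point $\{x\}$, gives $\langle\xi_b,\delta_x\rangle=0$ and $\langle\xi_b,\nu\rangle=1-\nu(\{x\})>0$ for every probability measure $\nu\neq\delta_x$. This $\xi_b$ verifies (iii), and the chain $[\mu=\delta_x]\Rightarrow(\mathrm{iii})\Rightarrow(\mathrm{ii})\Rightarrow(\mathrm{i})$ closes the cycle and completes the equivalence.
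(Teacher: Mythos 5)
Your proposal is correct, and it takes essentially the same approach as the proof the paper relies on: the paper itself gives no argument for Theorem \ref{Functional}, deferring entirely to Gl\"uck's MathOverflow answer \cite{MO302479}, which rests on the same ingredients you use --- the Kakutani/Gelfand representation $C_b(S,\mathbb{R})\cong C(K,\mathbb{R})$ with $K$ compact Hausdorff, the identification of the functionals satisfying (i), (iv) or (v) with the Dirac masses on $K$, and the exposure of $\delta_x$ by the bidual element $\nu\mapsto\int_K \mathbf{1}_{K\setminus\{x\}}\,d\nu$. The only real difference is that you verify ``extreme $\Leftrightarrow$ Dirac'' by hand (conditional-measure splitting, plus the state-splitting proof of (i)$\Rightarrow$(v)), where the cited source can simply quote the Banach-lattice fact that the extreme points of the state space of an AM-space with unit are exactly the lattice homomorphisms (cf.\ \cite{MR0423039}, \cite{MR1128093}, which appear in the paper's bibliography for precisely this reason), so your write-up is marginally more self-contained.
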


The proof of above theorem can be found in \cite{MO302479}.

\bigskip

For the next corollary we assume that $X=E^{\mathbb{N}}$, where
$(E,d_{E})$ is a non-compact standard Borel space satisfying the following 
property. There exists $a_0\in E$ and a sequence $(a_n)_{n\geq 1}$
of distinct points such that $d_{E}(a_0,a_{n-1})<d_{E}(a_0,a_n)$ and $d(a_0,a_n)\to\mathrm{diam}(E)$.
For the sake of simplicity, we also assume that $\mathrm{diam}(E)=1$ and $d(x,y)<1$, for all $x,y\in X$.

\begin{corollary}\label{cor:unique-additive-measure}
If $X$ is a non-compact space satisfying the above property, 
then there exists  an extreme, finitely additive measure in  
$\mathscr{M}_{\sigma}^{a}(X)\setminus\mathscr{M}_{\sigma}(X)$ (i.e., not necessarily countably additive measure) 
which  is the unique  maximizing measure  for some potential $f\in C_{b}(X,\mathbb{R})$.
\end{corollary}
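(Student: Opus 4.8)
The plan is to construct an explicit bounded continuous potential $f$ whose unique maximizing measure is forced to ``escape to infinity'' along the distinguished sequence $(a_n)$ in $E$, and then to identify this maximizer as a finitely additive measure that is an extreme point of $\mathscr{M}^a_\sigma(X)$ but fails to be countably additive. First I would exploit the hypothesis on $E$: since $d_E(a_0,a_n) \uparrow \mathrm{diam}(E)=1$ with the $a_n$ distinct, the points $a_n$ have no convergent subsequence in $E$ (otherwise the limit would realize a point at distance $1$ from $a_0$, contradicting that the distances strictly increase toward $1$ but are never attained). This non-compactness is exactly what allows a sequence of measures to fail to have a countably additive weak-$*$ limit, while Banach--Alaoglu still guarantees a limit inside $rba(X)$.

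The key construction is the potential. I would take a function $\xi \in C_b(X,\mathbb{R})$ built from $d_E(a_0, x_1)$ — for instance $\xi(x) = g(d_E(a_0,x_1))$ for a strictly increasing bounded continuous $g:[0,1)\to\mathbb{R}$ — and symmetrize it into a shift-compatible object by considering the associated maximizing problem $m(f)=\sup_{\mu\in\mathscr{M}^a_\sigma(X)}\langle\mu,f\rangle$ as in the Ergodic Optimization part of the Main Results. The point is that $\xi$ has supremum $\sup_X \xi$ which is \emph{not attained} on $X$ (because $d_E(a_0,x_1)<1$ always), but is approached along any sequence whose first coordinate runs through $(a_n)$. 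Let $\delta_{a_n^\infty}$ denote the Dirac mass at the constant sequence $(a_n,a_n,\ldots)$, which is shift-invariant and hence lies in $\mathscr{M}^a_\sigma(X)$; then $\langle\delta_{a_n^\infty},\xi\rangle = g(d_E(a_0,a_n)) \to \sup g = m(\xi)$, so the supremum in the maximizing problem is approached but not attained by any countably additive shift-invariant measure. By weak-$*$ compactness of $\mathscr{M}^a_\sigma(X)$, the net $(\delta_{a_n^\infty})$ has a subnet converging to some $\mu_\infty \in \mathscr{M}^a_\sigma(X)$ with $\langle\mu_\infty,\xi\rangle = m(\xi)$; this $\mu_\infty$ cannot be countably additive since no element of $\mathscr{M}_\sigma(X)$ attains the strict supremum.

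Next I would invoke Theorem \ref{Functional}. The candidate maximizer should be chosen to be an algebra homomorphism — concretely, a limit point of point-evaluations $\delta_{a_n^\infty}$ is naturally multiplicative on $C_b(X,\mathbb{R})$, since each $\delta_{a_n^\infty}$ is, and multiplicativity passes to weak-$*$ limits on a fixed pair $f_1,f_2$. By the equivalence (v)$\Leftrightarrow$(i) in Theorem \ref{Functional} applied to $S=X$, such a $\mu_\infty$ is therefore an extreme point of $\mathscr{M}^a_1(X)$, and shift-invariance places it in $\mathscr{M}^a_\sigma(X)$. To force \emph{uniqueness} of the maximizer, I would pass from $\xi$ to a genuinely shift-adapted potential and use the strict-exposedness characterization (ii)/(iii): the goal is to produce $f\in C_b(X,\mathbb{R})$ for which $\mu_\infty$ is an exposed point of $\mathscr{M}^a_\sigma(X)$, so that $\langle\mu,f\rangle$ has a \emph{strict} maximum at $\mu_\infty$. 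Concretely one arranges $f$ so that any maximizing measure must assign full mass to the ``boundary at infinity'' traced out by $(a_n)$, and the algebra-homomorphism rigidity pins this down to the single point $\mu_\infty$.

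\textbf{Main obstacle.} The delicate step is not the existence of a finitely additive maximizer — that is immediate from compactness and non-attainment — but rather securing \emph{simultaneously} that the maximizer is (a) unique and (b) extreme/multiplicative, while remaining shift-invariant. The tension is that Theorem \ref{Functional} characterizes extreme points of the \emph{full} state space $\mathscr{M}^a_1(X)$ via multiplicativity, whereas we need an extreme point of the smaller shift-invariant set $\mathscr{M}^a_\sigma(X)$ that is \emph{uniquely} selected by a continuous potential. I expect the real work to lie in verifying that the potential $f$ can be engineered so that the strictly-positive functional $\xi$ from part (iii) of Theorem \ref{Functional} is compatible with shift-invariance — i.e. that the exposing functional separates $\mu_\infty$ from all other invariant states and not merely from all states — and in checking that $\mu_\infty \notin \mathscr{M}_\sigma(X)$, which reduces to showing $\mu_\infty$ charges the complement of every compact set, a consequence of the $a_n$ escaping to the unattained boundary at distance $\mathrm{diam}(E)=1$.
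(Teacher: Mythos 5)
Your proposal follows the same skeleton as the paper's proof: Dirac measures at the shift-fixed points $x^{(n)}=(a_n,a_n,\ldots)$, a weak-$*$ convergent subnet inside the compact set $\mathscr{M}^{a}_{1}(X)$, the observation that shift-invariance and multiplicativity pass to the limit on each fixed pair of functions, and the equivalence i)$\Leftrightarrow$v) of Theorem \ref{Functional} to get extremality. Your route to non-countable-additivity is different but sound: the limit functional attains the unattained supremum of $\xi(x)=g(d_E(a_0,x_1))$, which no countably additive probability measure can do; the paper instead applies the Portmanteau theorem to the closed sets $B_n=\{x\in X:\, d(x,x^{(0)})\ge d(x^{(0)},x^{(n)})\}$, which decrease to $\emptyset$ while $\mu(B_n)\ge\limsup_{\alpha}\delta_{x^{(\alpha)}}(B_n)=1$. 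Either argument works.

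The genuine gap is in the uniqueness step, which is the actual content of Corollary \ref{cor:unique-additive-measure}. Your explicit potential $\xi$ cannot do the job: since $(a_n)$ has no convergent subsequence, the set $\{x^{(n)}\}$ is closed and discrete in $X$, so there exists $h\in C_b(X,\mathbb{R})$ with $h(x^{(2k)})=0$ and $h(x^{(2k+1)})=1$; hence $(\delta_{x^{(n)}})_n$ has at least two distinct weak-$*$ cluster points, and \emph{every} cluster point is shift-invariant, multiplicative, and attains $\sup_X\xi$. So $\xi$ has many maximizers, and your plan is left with the unproven claim that some ``engineered'' $f$ exposes $\mu_\infty$. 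Moreover, the obstacle you single out --- making the exposing functional of item iii) ``compatible with shift-invariance'' --- is a red herring: item iii) produces a functional vanishing at $\mu$ and strictly positive on all of $\mathscr{M}^{a}_{1}(X)\setminus\{\mu\}$, so the corresponding potential is uniquely maximized at $\mu$ over the \emph{entire} state space, and a fortiori over $\mathscr{M}^{a}_{\sigma}(X)$, which contains $\mu$; no shift-adaptedness is needed. The step you actually need, and never address, is how to convert that exposing functional, which Theorem \ref{Functional} provides only as an element of the bidual $C_b(X,\mathbb{R})^{**}$, into an honest potential in $C_b(X,\mathbb{R})$. This is exactly where the paper does its work: it invokes the representation $\xi(\nu)=\langle\nu,g\rangle$ for some $g\in C_b(X,\mathbb{R})$ (via \cite[Proposition 3.14]{MR2759829}, i.e.\ weak-$*$ continuity of the exposing functional) and then sets $f=-g$. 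Without this representation step, or a substitute for it, your argument stops short of producing the bounded continuous potential whose existence the corollary asserts.
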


\begin{proof}
For $n\geq 0$, let $x^{(n)}=(a_n,a_n,\ldots)\in X$ and consider 
the associated sequence of Dirac delta measures
$(\delta_{x^{(n)}})_{n\geq 1}$. By compactness of $\mathscr{M}^{a}_1(X)$, 
this sequence of measures, viewed as a topological net, has a convergent
subnet $(\delta_{ x^{(\alpha)} })_{\alpha\in D}$. 
Let $\mu=\lim_{d\in D}\delta_{ x^{(\alpha)} }$.
We claim that $\mu$ is not a countably additive measure.
Indeed, take $B_n = X\setminus  \{x\in X: d(x,x^{(0)})<d(x^{(0)},x^{(n)}) \}$.
Note that the hypothesis considered on $E$ imply $B_n\downarrow \emptyset$. 
Suppose by contradiction that $\mu$
is a countably additive measure. 
Since for each $n\geq 1$, the set $B_n$ is closed, 
follows from Portmanteau theorem (Theorem 6.1 item (c) of \cite{MR2169627})
\[
\mu(B_n) \geq \limsup_{\alpha\in D} \delta_{x^{(\alpha)}}(B_n)=1.
\]
Consequently, $\mu$ is not a countably additive measure which is a contradiction.

A straightforward computation shows that any such cluster point $\mu$
is a shift-invariant measure. It remains to  show that $\mu$ is an extreme point 
of $\mathscr{M}^{a}_1(X)$. This fact is a 
consequence of the equivalence $i)\Leftrightarrow v)$ 
of Theorem \ref{Functional}. Indeed, for each $\alpha\in D$ 
the measure $\delta_{ x^{(\alpha)} }$
is an extreme point of $\mathscr{M}^{a}_1(X)$ as 
$
\left<\delta_{ x^{(\alpha)} }, f_1f_2\right>
=
\left<\delta_{ x^{(\alpha)} }, f_1\right>
\left<\delta_{ x^{(\alpha)} }, f_2\right>
$. 
In order to conclude that $\mu$ satisfies a similar relation 
it is enough to observe that the above equality is stable under weak-$*$ limits 
so we have 
$\left<\mu, f_1f_2\right>=\left<\mu, f_1\right>\left<\mu, f_2\right>$. 
By using again the equivalence $i)\Leftrightarrow v)$ of Theorem \ref{Functional}
it follows that $\mu$ is an extreme point.

Let $\xi:C_b(X,\mathbb{R})^{**}\to\mathbb{R}$ be the linear functional
obtained in item iii) of Theorem \ref{Functional} 
to $\mu$. Recall that $\xi$ is of the form $\xi(\nu)=\left<\nu, g\right>$ 
for some $g$ in $C_b(X,\mathbb{R})$, see \cite[Proposition 3.14]{MR2759829}.
Finally, by taking the potential $f=-g$
and considering the functional 
$F\in C_b(X,\mathbb{R})^{*}$ defined by $F(\mu)=\left<\mu, f\right>$ 
the result follows. 
\end{proof}

\section{Applications}\label{sec:applications}

\subsection{Finite Entropy Ground-States and Maximizing Measures}\label{subsec:Finite Entropy Ground-States and Maximizing Measures}

In this section we consider the following ergodic optimization problem.
We fix a potential $f\in C_{b}(X,\mathbb{R})$ and consider the problem 
of finding an element of $\mathscr{M}_\sigma^a(X)$ with finite entropy 
which attains the  supremum 
\[
m(f)
=
\sup_{\substack {\nu \in \mathscr{M}_\sigma^a(X)\\[0.05cm] \mathtt{h}^{\mathtt{v}}(\nu)>-\infty}}
\ \int_{X} f d\nu.
\] 
An invariant measure $\mu$ having finite entropy is 
referred to as a \textit{ maximizing measure} for 
the potential  $f$  if it attains the supremum in the above 
variational problem, that is,
\[
m(f)
=
\sup_{\substack {\nu \in \mathscr{M}_\sigma^a(X)\\ \mathtt{h}^{\mathtt{v}}(\nu)>-\infty}}
\int f d\nu
=
\int f d\mu.
\] 
The above supremum is always finite since $f\in C_{b}(X,\mathbb{R})$ 
but the existence of a maximizing measure is a non-trivial problem
because the subset of functionals in $\mathscr{M}_\sigma^a(X)$ with finite entropy
is non-compact. 

Consider a fixed bounded H\"older potential $f$ and a real parameter $\beta>0$. 
We denote by $\mu_{\beta f}$ the equilibrium state constructed above 
associated to the potential $\beta f$.
We now show that any cluster point $\mu_{\infty}$
of the family $(\mu_{\beta f})_{\beta>0}$, such that 
$\mathtt{h}^{\mathtt{v}}(\mu_{\infty})>-\infty$ 
is a maximizing measure for $f$.   
It is standard to call $\mu_{\infty}$  a 
\emph{Gibbs State at zero temperature for the potential} $f$ or simply a
\emph{ground state} for $f$.

\begin{theorem}
	Let $f$ be a bounded continuous potential, $\beta>0$ 
	and $\mu_{\beta f}\in \mathrm{Eq}(\beta f)$. Suppose there 
	is at least one cluster point $\mu_{\infty}$ of $(\mu_{\beta f})_{\beta>0}$ 
	having finite entropy. Then $\mu_{\infty}$ 
	is a maximizing measure for the potential $f$.
\end{theorem}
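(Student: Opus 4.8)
The plan is to exploit the variational characterization of the equilibrium states $\mu_{\beta f}$ and to pass to the zero-temperature limit $\beta \to \infty$. Since $\mathscr{M}_{\sigma}^{a}(X)$ is weak-$*$ compact, $\mu_{\infty}$ is the limit of a convergent subnet $(\mu_{\beta_{d} f})_{d \in D}$ with $\beta_{d} \to \infty$, and I would establish the two inequalities $\langle \mu_{\infty}, f\rangle \leq m(f)$ and $\langle \mu_{\infty}, f\rangle \geq m(f)$ separately. The first is immediate: $\mu_{\infty}$ lies in $\mathscr{M}_{\sigma}^{a}(X)$ and, by hypothesis, $\mathtt{h}^{\mathtt{v}}(\mu_{\infty}) > -\infty$, so $\mu_{\infty}$ is admissible in the supremum defining $m(f)$, whence $\langle \mu_{\infty}, f\rangle \leq m(f)$.

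For the reverse inequality, the key observation is that the variational entropy is bounded above by zero. Indeed, taking $g \equiv 0$ in the infimum formula \eqref{def-entropy} and using that $\mathscr{L}_{0}1 = 1$ (so that $0$ is a normalized potential and $\lambda_{0} = 1$), one obtains $\mathtt{h}^{\mathtt{v}}(\mu) \leq -\langle \mu, 0\rangle + \log 1 = 0$ for every $\mu \in \mathscr{M}_{\sigma}^{a}(X)$. Now fix any competitor $\nu \in \mathscr{M}_{\sigma}^{a}(X)$ with $\mathtt{h}^{\mathtt{v}}(\nu) > -\infty$. Since $\mu_{\beta f}$ is an equilibrium state for $\beta f$, Definition \ref{def-eq-states} yields
\[
\mathtt{h}^{\mathtt{v}}(\mu_{\beta f}) + \beta \langle \mu_{\beta f}, f\rangle \geq \mathtt{h}^{\mathtt{v}}(\nu) + \beta \langle \nu, f\rangle .
\]
Dividing by $\beta > 0$ and invoking $\mathtt{h}^{\mathtt{v}}(\mu_{\beta f}) \leq 0$, this rearranges to
\[
\langle \mu_{\beta f}, f\rangle \geq \langle \nu, f\rangle + \frac{\mathtt{h}^{\mathtt{v}}(\nu)}{\beta} - \frac{\mathtt{h}^{\mathtt{v}}(\mu_{\beta f})}{\beta} \geq \langle \nu, f\rangle + \frac{\mathtt{h}^{\mathtt{v}}(\nu)}{\beta} .
\]

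Passing to the subnet with $\beta_{d} \to \infty$, the term $\mathtt{h}^{\mathtt{v}}(\nu)/\beta_{d}$ tends to $0$ because $\mathtt{h}^{\mathtt{v}}(\nu)$ is finite, while $\langle \mu_{\beta_{d} f}, f\rangle \to \langle \mu_{\infty}, f\rangle$ by weak-$*$ convergence and $f \in C_{b}(X,\mathbb{R})$. Hence $\langle \mu_{\infty}, f\rangle \geq \langle \nu, f\rangle$, and taking the supremum over all admissible $\nu$ gives $\langle \mu_{\infty}, f\rangle \geq m(f)$. Combining the two bounds, $\langle \mu_{\infty}, f\rangle = m(f)$; since $\mu_{\infty}$ has finite entropy, it is a maximizing measure for $f$.

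The point requiring care is the control of the entropy contribution in the limit. A priori $\mathtt{h}^{\mathtt{v}}(\mu_{\beta f})$ could be arbitrarily negative, but what rescues the argument is that only the one-sided bound $\mathtt{h}^{\mathtt{v}}(\mu_{\beta f}) \leq 0$ is needed, and the corresponding term $-\mathtt{h}^{\mathtt{v}}(\mu_{\beta f})/\beta$ enters the lower-bound estimate with a favourable sign. I expect the only genuinely delicate step to be the net bookkeeping: because $\mathscr{M}_{\sigma}^{a}(X)$ need not be metrizable, $\mu_{\infty}$ is a cluster point along a subnet rather than a sequential limit, so one must verify that the subnet can be taken with $\beta_{d} \to \infty$ and that the inequalities above, valid for every $\beta$, persist under passage to this subnet.
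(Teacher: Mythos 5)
Your proof is correct and follows essentially the same route as the paper's: both directions rest on the variational inequality for $\mu_{\beta f}$ divided by $\beta$, the non-positivity of $\mathrm{h}^{\mathrm{v}}$, and weak-$*$ passage to the limit along the subnet. The only cosmetic differences are that for the upper bound you invoke the admissibility of $\mu_{\infty}$ itself rather than taking limits of the admissible measures $\mu_{\beta f}$, and that you explicitly justify $\mathrm{h}^{\mathrm{v}}\leq 0$ (via $g=0$, $\lambda_{0}=1$), a step the paper leaves implicit.
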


\begin{proof}
Let $\mu_{\infty}$ be an arbitrary cluster point of 
the family $(\mu_{\beta f})_{\beta> 0}$, 
such that $\mathtt{h}^{\mathtt{v}}(\mu_{\infty})>-\infty$.
Note that for all $\beta>0$ we have that 
$\mathtt{h}^{\mathtt{v}}(\mu_{\beta f})>-\infty$ 
and $\mu_{\beta f}\in \mathscr{M}_\sigma^a(X)$. 
Therefore,
\begin{equation*}
\int_{X} f \, d\mu_{\infty} 
= 
\lim_{\beta\to  \infty }\int_{X} f\, d\mu_{\beta f}
\leq 
m(f).
\end{equation*}

On the other hand, for any $\nu\in \mathscr{M}_\sigma^a(X)$,  
we get from the variational principle that
$
\langle \beta f, \mu_{\beta f}\rangle + \mathtt{h}^{\mathtt{v}}(\mu_{\beta f})\geq 
\langle \beta f, \nu\rangle +\mathtt{h}^{\mathtt{v}}(\nu),
$
and that the inequality is non-trivial if $\mathtt{h}^{\mathtt{v}}(\nu)>-\infty$. 
In this case, 
\[
\int_{X} f\, d\mu_{\beta f}+\dfrac{\mathtt{h}^{\mathtt{v}}(\mu_{\beta f})}{\beta}
\geq 
\int_{X} f \,d\nu+\dfrac{\mathtt{h}^{\mathtt{v}}(\nu)}{\beta}
\]
and consequently, by the non positivity of $\mathtt{h}^{\mathtt{v}}$, 
\begin{equation*}
\int_{X} f\, d\mu_{\infty}=\lim_{\beta \to \infty}\int f d\, \mu_{\beta f}
\geq
\lim_{\beta \to \infty} 
\int_{X} f\, d\nu+\dfrac{\mathtt{h}^{\mathtt{v}}(\nu)}{\beta}
=
\int_{X} f\, d\nu.
\end{equation*}
Since this inequality holds for any $\nu\in \mathscr{M}_\sigma^a(X)$ 
having finite entropy, the result follows.
\end{proof}

\begin{remark}
	We remark that it is not possible to conclude from the previous proof that 
	the cluster point $\mu_{\infty}$ considered above is a countably additive measure. 
	If we do not require that $\mathtt{h}^{\mathtt{v}}(\mu_{\infty})>-\infty$, then 
	the above argument still gives us the inequality 
		\[
	\sup_{\substack {\nu \in \mathscr{M}_\sigma^a(X)\\ \mathtt{h}^{\mathtt{v}}(\nu)>-\infty}}
	\int f d\nu
	\leq 
	\int f d\mu_{\infty},
	\] 
	which, in principle,  could be strict.
\end{remark}

\subsection{Markov Chains on Standard Borel Spaces}
\label{sec-applications3}
In this section we show how to apply the results obtained here to
discrete time Markov Chains taking values in a metric space $E$. 
We then show how to construct and prove some stability results 
in \cite{MR2509253} within the framework of Thermodynamic Formalism.

Roughly speaking, a discrete-time Markov chain $\Phi$ on a metric space $E$ is a countable
collection $\Phi\equiv \{\Phi_0,\Phi_1,\ldots\}$ of random variables,
with $\Phi_i$ taking values in $E$ so that its future trajectories 
depend on its present and its past only through the current value. 
A concrete construction of a discrete time Markov chain, can be made by specifying a measurable 
space $(X,\mathscr{F})$, where each element of $\Phi$ is defined, an initial probability distribution 
$p:\mathscr{B}(E)\to [0,1]$, and a transition probability kernel $P:E\times \mathscr{B}(E)\to [0,1]$
such that
\begin{itemize}
	\item[i)] for each fixed $A\in \mathscr{B}(E)$ the map $a\longmapsto P(a,A)$
	is a $\mathscr{B}(E)$-measurable function,
	\item[ii)] for each fixed $a\in E$ the map $A\longmapsto P(a,A)$ is a 
	Borel probability measure on $E$.
\end{itemize}
\begin{definition}\label{def-markov-chain}
	A stochastic process $\Phi$ defined on $(X,\mathscr{F},\mathbb{P}_{\mu})=(E^{\mathbb{N}},\mathscr{B}(E^{\mathbb{N}}),\mathbb{P}_{\mu})$ 
	and taking values on $E$ is called a time-homogeneous Markov Chain, with transition probability kernel 
	$P$ and initial distribution $\mu$ if its finite dimensional distributions satisfy, for each $n\geq 1$,
	\begin{multline*}
	\mathbb{P}_{\mu}(\Phi_0\in A_{0},\ldots,\Phi_n\in A_{n})
	\\=
	\int_{A_0}\ldots\int_{A_{n-1}}
	P(y_{n-1},A_n)dP(y_{n-2},y_{n-1})\ldots dP(y_0,y_1)d\mu(y_0).
	\end{multline*} 
\end{definition}

\begin{definition}[Invariant Measures]
	A sigma-finite measure $\pi$ on $\mathscr{B}(E)$ with the property
	\[
	\pi(A) = \int_{E} P(x,A)\, d\pi(x)
	\]
	will be called invariant.  
\end{definition}

The key results about the existence of invariant measures for a Markov chain 
are based on recurrence, see for example 
Theorem 10.0.1 in \cite{MR2509253}. In what follows, we prove the existence
of such measures for a certain class of kernels based on the results of Section \ref{sec-RPF}.  
In order to do so, assume that  $f\in \mathrm{Hol}(\alpha)$ is a summable potential with 
respect to some \textit{a priori} measure $p$ on $E$, that is 
$\|\mathscr{L}_{f}(1)\|_\infty < \infty$. 
Then, for each $x=(x_1,x_2,\ldots)\in E^{\mathbb{N}}$, 
the map $A \mapsto \mathscr{L}_{f}(1_{A}\circ \pi_1)(x)$,  
for $x \in X$ and $A\in \mathscr{B}(E)$ defines a finite measure on $X$. 
In particular,  $dP(x,a) \equiv e^{f(ax)}dp(a)$, or equivalently,
\begin{align*}
P(x,A)
=
\int_{E} 1_{A}(a)\, dP(x,a)
\equiv
 \int_{E} e^{f(ax)} (1_{A}\circ \pi_1)(ax)\, dp(a)
=
\mathscr{L}_{f}(1_{A}\circ \pi_1)(x)
\end{align*}
defines a transition kernel 
which might be neither  a probability measure nor constant
 on $\{y \in E^{\mathbb{N}} : y_1 = x_1\}$. However, 
 it remains to check Kolmogorov's consistency conditions in order to 
 verify that $P$ defines a stochastic process. 
 That is, as $P$ induces the measure $\mathbb{P}_{x}$ on $E^{n}$ 
 with respect to the initial distribution $\delta_x$ for $x \in X$,  given by 
	\begin{multline*}
	\mathbb{P}_{x}(\Phi_1\in A_{1},\ldots\Phi_n\in A_{n}) =   \mathscr{L}_{f}\left(1_{\pi_1^{-1}A_1}  \mathscr{L}_{f}\left(1_{\pi_1^{-1}A_2} \cdots \mathscr{L}_{f}\left(1_{\pi_1^{-1}A_n} \right) \cdots \right)\right)(x) 
	\\
	=
	 \mathscr{L}^n_{f} \left(\textstyle \prod_{i=1}^n 1_{\pi_1^{-1}A_i}\circ \sigma^{n-i} \right)(x) 
	 = \mathscr{L}^n_{f} \left( 1_{\left\{(y_i) \in X:  y_i \in A_{n+1-i}, i=1,\ldots n \right\}} \right)(x),
	\end{multline*}  
it is necessary and sufficient that $ \mathscr{L}_{f}(1)=1$, 
or in other words, $f$ has to be normalized. Now let 
$\nu$ be the unique probability measure 
with $\mathscr{L}_{f}^{*}(\nu)=\nu$ as in Section \ref{sec-RPF}. 
With respect to this initial distribution, the above implies that 
	\begin{equation}\label{eq:natural_extension}
	\mathbb{P}_{\nu}(\Phi_1\in A_{1},\ldots,\Phi_n\in A_{n}) 
	= 
	\nu\left({\left\{(y_i) \in X:  y_i \in A_{n-i}, i=1,\ldots n \right\}} \right).
	\end{equation}
As the right-hand side is well known from the construction of the natural 
extension of a measure-preserving dynamical system, one obtains the following 
relation between $\sigma$ on $X$ and the stochastic process defined 
by $P$ with respect to a normalized potential $f$ through the 
bilateral shift on $(E^\mathbb{Z},\hat{\nu})$, where $\hat{\nu}$ is the extension 
of $\nu$ to $E^\mathbb{Z}$ through \eqref{eq:natural_extension}. 
In this setting, $\sigma$ corresponds to the left shift whereas  
$(\Phi_i:i\in \mathbb{N})$ is given by $\Phi_i = y_{-i}$ for $(y_i) \in E^\mathbb{Z}$. 
Furthermore, as $\mathscr{L}_{f}(1)=1$, it follows
from the same argument as in the proof of Theorem \ref{theo:assymptotically stable} that
\begin{equation}\label{eq:geoemtric_ergodicity}  
d\left(P^n(x,\cdot),\nu\right)
= 
d\left((\mathscr{L}^n_{f})^\ast(\delta_x),\nu\right)
\leq 
Cs^n d(\delta_x,\nu) 
\leq 
Cs^n,   
\end{equation}
where $d$ is the Wasserstein metric on the space of probability measures as defined in \eqref{eq:Wasserstein-through-Kantotrovich}
(see Theorem 1.1.5 in \cite{MR3058744}). 
Note that \eqref{eq:geoemtric_ergodicity} is also known as \emph{geometric ergodicity} 
in the literature on probability theory and that 
geometric ergodicity was established in 
\cite{MR3628927,MR3568728,StadlbauerZhang:2017a} 
for non-stationary and random countable shift spaces.

Observe that $(\Phi_i)$ in general is not a Markov chain as $P(x,\cdot)$ 
might not only depend on the the first coordinate of $x$. 
However, by assuming that $P(x,\cdot) = P(x_1,\cdot)$, 
or equivalently, that $f$ only depends on the first two coordinates, 
one easily obtains that $\pi\equiv \nu\circ \pi_1^{-1}$ is $P$-stationary as 
\begin{align*}
\int_{E} P(x_1,A)\, d\pi(x_1)
&=
\int_{E} P(x_1,A)\, d[\nu\circ\pi_1^{-1}](x_1)
\\
&=
\int_{E} \mathscr{L}_{f}(1_A\circ \pi_1)(x_1,x_2,\ldots)\, d[\nu\circ\pi_1^{-1}](x_1)
\\
&=
\int_{E^{\mathbb{N}}} \mathscr{L}_{f}(1_A\circ \pi_1)\, d\nu
=
\int_{E^{\mathbb{N}}} 1_A\circ \pi_1\, d[\mathscr{L}_{f}^{*}\nu]
=
\int_{E^{\mathbb{N}}} 1_A\circ \pi_1\, d\nu
\\
&=
\int_{E} 1_A\, d[\nu\circ\pi_1^{-1}] 
= 
\pi(A).
\end{align*}
Note that the argument depends on the assumption that $f$ only depends 
on the first coordinates, as if this would not be the case, 
the identity in line 3 would no longer be satisfied. 

\subsection{Asymptotic Stability of Markov Operators}
\label{sec-applications2}
In this section, we turn our attention to the closely related problem 
of asymptotic stability of Markov operators on standard Borel spaces and indicate
how some of the stability problems  
considered in \cite{MR1813364} can be approached by the results in Section \ref{sec-RPF}. 

Let $\mathscr{M}_{\mathrm{fin}}(X)$ be the set of all  finite nonnegative Borel measures on $X$.  
An operator $P:\mathscr{M}_{\mathrm{fin}}(X)\to \mathscr{M}_{\mathrm{fin}}(X)$ is 
called a \emph{Markov Operator} if it satisfies the following two conditions:
\begin{itemize}
	\item[(i)] \emph{positive linearity:}
	$
	P(\lambda_1\mu_1+ \lambda_2\mu_2)=\lambda_1P\mu_1+\lambda_2P\mu_2
	$, for all $\lambda_1, \lambda_2\geq 0$ and $\mu_1, \mu_2 \in \mathscr{M}_{\mathrm{fin}}(X)$,
	\item[(ii)] \emph{preservation of the norm:}
	$
	P\mu(X)=\mu(X)~~\text{for}~~\mu\in \mathscr{M}_{\mathrm{fin}}(X).
	$
\end{itemize}
A Markov operator is called a \emph{Feller operator} if there is a linear operator 
$U:C_b(X,\mathbb{R})\to C_b(X,\mathbb{R})$, the 
pre-dual to $P$, such that 
\[
\left<\mu,Uf\right>=\left<P\mu,f\right>~~\text{for}~~ 
f\in C_b(X,\mathbb{R}), \mu \in \mathscr{M}_{\mathrm{fin}}(X).
\]
Finally, a measure in $\mathscr{M}_{\mathrm{fin}}(X)$ is called 
\emph{stationary} if $P\mu=\mu$, and $P$ is called \emph{asymptotically stable} 
if there exists a stationary distribution $\nu$  such that 
\[
\lim_{n\to \infty}d(P^n\mu,\nu)=0, \quad \text{for all}\ \mu\in \mathscr{M}_1(X),
\]
where, as above, $d$ refers to the Wasserstein metric.
\begin{example}
	Let be $(E, \mathscr{E})$ a standard Borel space and $X=E^{\mathbb{N}}$ 
	the product space endowed with the product metric
	$
	d_X(x,y)=\sum_{n=1}^{\infty}1/2^n\min\{d_E(x_n,y_n), 1\}.
	$
	It is easy to see that $(X, d_X)$ is a Polish space.
	If $f$ is a bounded $\alpha$-H\"older continuous normalized potential, 
	then the restriction to $\mathscr{M}_{\mathrm{fin}}(X)$ of the 
	Banach transpose of the Ruelle operator $\mathscr{L}_f^{*}$ is a 
	Markov operator and its associated Feller operator is 
	$\mathscr{L}_f:C_b(X,\mathbb{R})\to C_{b}(X,\mathbb{R})$.
\end{example}

\begin{theorem}\label{theo:assymptotically stable}
	Under the assumptions of the above example, the Markov operator 
	$P=\mathscr{L}_f^{*}|_{\mathscr{M}_{\mathrm{fin}}(X)}$ is asymptotically stable. 
	Moreover, there exist $C>0$ and $s \in (0,1)$ such that, 
	where $\nu$ refers to the unique stationary probability measure and $d$ to the Wasserstein metric defined in \eqref{eq:Wasserstein-through-Kantotrovich}, 
	\[ d(P^n(\mu),\nu) \leq Cs^n \hbox{ for all } \mu \in \mathcal{M}_{1}.\]  
\end{theorem}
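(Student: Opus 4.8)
The plan is to reduce the assertion to the strict Wasserstein contraction established in Section \ref{sec-RPF}. Since $f$ is normalized, $\mathscr{L}^n_f(1)=1$ for every $n$, so the auxiliary operators collapse to $\mathbb{P}^m_n=\mathscr{L}^m_f$ independently of $n$, and $P^n=(\mathscr{L}^n_f)^\ast$ on $\mathscr{M}_1(X)$. Moreover, since $\lambda=\int\mathscr{L}_f(1)\,d\nu=1$, the conformal measure $\nu$ produced in Section \ref{sec-RPF} satisfies $\mathscr{L}^\ast_f(\nu)=\nu$, that is $P\nu=\nu$; this is the stationary probability measure claimed in the statement, and $P$ maps $\mathscr{M}_1(X)$ into itself by preservation of the norm.

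First I would record the two inputs from Section \ref{sec-RPF}. On the one hand, the contraction result there provides an integer $m\in\mathbb{N}$ and a constant $r\in(0,1)$ such that $(\mathscr{L}^m_f)^\ast$ is an $r$-contraction of $(\mathscr{M}_1(X),d)$, i.e. $d((\mathscr{L}^m_f)^\ast\mu_1,(\mathscr{L}^m_f)^\ast\mu_2)\le r\,d(\mu_1,\mu_2)$ for all $\mu_1,\mu_2\in\mathscr{M}_1(X)$. On the other hand, since the diameter of $(X,\overline{d})$ is finite, Kantorovich duality \eqref{eq:Wasserstein-through-Kantotrovich} shows that the diameter $D\equiv\sup_{\mu,\mu'}d(\mu,\mu')$ of $(\mathscr{M}_1(X),d)$ is finite as well, because for any $\overline{d}$-Lipschitz $f$ normalized by $\inf f=0$ one has $\|f\|_\infty\le\mathrm{diam}(X,\overline{d})$ and hence $\int f\,d(\mu-\mu')\le\mathrm{diam}(X,\overline{d})$.

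Next, using $(\mathscr{L}^m_f)^\ast\nu=\nu$ and iterating the contraction, I obtain $d(P^{mk}\mu,\nu)=d(((\mathscr{L}^m_f)^\ast)^k\mu,((\mathscr{L}^m_f)^\ast)^k\nu)\le r^k\,d(\mu,\nu)$ for every $k$. For a general exponent I write $n=mk+j$ with $0\le j<m$ and split $P^n=P^{mk}\circ P^{j}$; applying the previous estimate to $P^{j}\mu$ in place of $\mu$ and then bounding $d(P^{j}\mu,\nu)\le D$ by the diameter gives $d(P^n\mu,\nu)\le r^k D$. Setting $s\equiv r^{1/m}\in(0,1)$, one has $r^k\le r^{n/m-1}=s^{\,n}/r$, whence $d(P^n\mu,\nu)\le (D/r)\,s^{\,n}=Cs^{\,n}$ with $C\equiv D/r$, for all $\mu\in\mathscr{M}_1(X)$. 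In particular $d(P^n\mu,\nu)\to0$, which is the asymptotic stability; and $\nu$ is the unique stationary probability measure, since any stationary $\mu$ satisfies $d(\mu,\nu)=d(P^{mk}\mu,\nu)\le r^k\,d(\mu,\nu)\to0$.

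I do not expect a deep obstacle, as the genuinely hard analytic input — the strict Wasserstein contraction of $(\mathscr{L}^m_f)^\ast$ — is already supplied in Section \ref{sec-RPF}. The only point requiring care is that the contraction is known merely for \emph{some} $m$ and not necessarily for $m=1$, so the single-step operator $P$ may fail to be non-expanding; this is precisely why I pass through the diameter bound $D$ to absorb the at most $m-1$ residual steps, rather than assuming that $P^{j}$ is itself contracting. A cosmetic alternative, avoiding the interpolation entirely, would be to invoke Theorem \ref{teo-hol-const-corr} directly with $h=1$ and $\lambda=1$, testing against $\overline{d}$-Lipschitz observables to recover the same exponential bound $Cs^{\,n}$.
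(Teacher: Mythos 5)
Your proposal is correct and follows essentially the same route as the paper's proof: both reduce the claim to the strict Wasserstein contraction of the dual operators for some iterate $m$ (Theorem 2.1 in \cite{MR3568728}, applicable since $\mathscr{L}_f(1)=1$ forces $\mathbb{P}^m_n=\mathscr{L}^m_f$ and $P^m=(\mathbb{P}^m_n)^\ast$), absorb the remainder $n=mk+j$ via the finite $\overline{d}$-diameter of $X$, and obtain $d(P^n\mu,\nu)\leq Cs^n$ with $s$ an $m$-th root of the contraction constant, together with uniqueness of the fixed point. The only cosmetic differences are that you identify $\nu$ explicitly as the conformal measure with $\lambda=1$ rather than invoking it as the unique fixed point produced by the contraction, and you keep a generic diameter bound $D$ where the paper uses that the $\overline{d}$-diameter equals $1$.
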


\begin{proof} As the potential is normalized, $\mathscr{L}_f(1) = 1$ and, in particular, $\mathbb{P}^m_{n} = \mathscr{L}_f^m$ and $(\mathbb{P}^m_{n})^\ast = P^m$ . Therefore, it follows from Theorem 2.1 in \cite{MR3568728} that there exists $t \in (0,1)$  and $m \in \mathbb{N}$ such that $d(\mathbb{P}^m_{n})^\ast(\mu), \mathbb{P}^m_{n})^\ast(\tilde{\mu}) \leq t d(\mu, \tilde{\mu})$. Moreover, as the $\overline{d}$-diameter of $X$ is 1, it follows 
that $d(\mu, \tilde{\mu})\leq 1$. Hence, for each $n \in \mathbb{N}$,  
\[  d(P^n(\mu),P^n(\tilde{\mu}) \leq t^{-1} t^{\frac{n}{m}}d(\mu, \tilde{\mu}).  \]
In particular, $P$ has a unique fixed point $\nu$ and, for $\tilde{\mu} \equiv  \nu$, it follows that  
$d(P^n(\mu),\nu) = d(P^n(\mu),P^n(\nu)) \leq t^{n/m -1}$. 
   \end{proof}

\subsection{Infinite Interacting Random Paths}
\label{sec-applications}

We consider the following random path process. 
At each discrete time $t=n\in\mathbb{N}$, a random point $q_n\in\mathbb{R}^d$ 
is chosen accordingly to the $d$-dimensional standard Gaussian measure
\[
G_{d}(A)=  
\frac{1}{(2\pi)^{n/2}}
\int_{A} \exp\left(-\frac{1}{2} \|v\|_{2}^{2}\right)
d\lambda^n(v).
\]
This sequence of random points induces a random path process on $\mathbb{R}^d\times [1,+\infty)$, 
given by the linear interpolation among these points, that is,
\begin{equation}\label{def-gamma-pn}
\gamma(t) = (1-(t-(n-1)))q_{n}+(t-(n-1))q_{n+1},
\quad \text{if} \ t\in [n,n+1].
\end{equation}
\begin{figure}[h]
\centering
\includegraphics[width=0.9\linewidth]{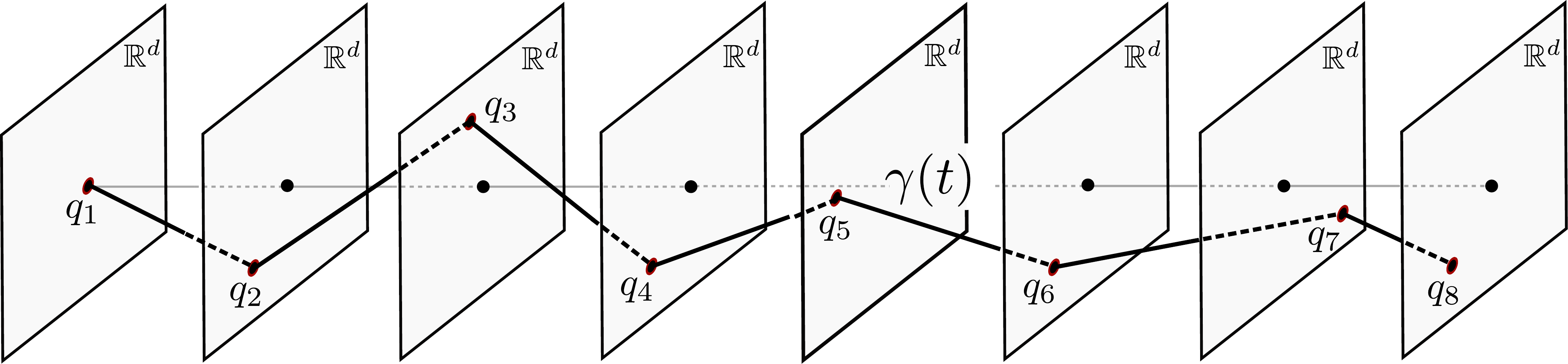}
\caption{An example of a random path $\gamma(t)$ constructed from $q_1,q_2,\ldots$}
\label{fig:caminhos}
\end{figure}

This construction induces a bijection $\Gamma:(\mathbb{R}^d)^{\mathbb{N}}\to \Upsilon$,
where $\Upsilon$ is the set of all ``polygonal'' paths of the form \eqref{def-gamma-pn}.

Let $p$ be the probability measure obtained by 
the pushforward of the infinite product measure 
$\prod_{i\in\mathbb{N}} G_{d}$ to $\Gamma$.
The space $\Upsilon$ of such 
all such paths has natural
structure of a standard Borel space inherited by $(\mathbb{R}^d)^{\mathbb{N}}$.
In the language of the  previous sections $E=\Upsilon$ and the \textit{a priori} 
measure $p$ is the push-forward of  $\prod_{i\in\mathbb{N}} G_{d}$.

Let $f:\Upsilon^{\mathbb{N}}\to\mathbb{R}$ be a H\"older bounded
potential. A point in $\Upsilon^{\mathbb{N}}$ will be denoted by $(\gamma_1,\gamma_2,\ldots)$.
Note that each coordinate $\gamma_n$ of a such 
point is actually a path in $\mathbb{R}^d\times [1,+\infty)$. 
Now we consider the Ruelle operator 
\[
\mathscr{L}_{f}(\varphi)(\gamma_1,\gamma_2,\ldots)
=
\int_{\Upsilon} \exp(f(\gamma,\gamma_1,\gamma_2,\ldots))
\ \varphi(\gamma,\gamma_1,\gamma_2,\ldots)\ dp(\gamma).
\]

Since we are assuming that $f$ is a bounded H\"older continuous 
Theorem \ref{teo-equi-measure-f-holder} implies the existence of 
an equilibrium measure $\mu_{f}$ which is also a countably additive Borel 
probability measure. This equilibrium measure $\mu_{f}$ describes what will be
the law of this infinite interacting random path process
in $\mathbb{R}^d\times [1,+\infty)$.
The interesting feature of this approach is to allow
the construction of an infinite interacting path process measure, 
having infinite-body interactions, since $f$ 
can be chosen as a function depending 
on infinitely many coordinates.

Interesting examples are obtained by the following 
class of potentials 
\[
f(\gamma_1,\gamma_2,\ldots) = 
-\sum_{n=1}^{\infty} J(n)
\frac{d_{\mathbb{H}}^{\alpha}(\gamma_1,\gamma_n)}
{1+d_{\mathbb{H}}^{\alpha}(\gamma_1,\gamma_n)}
\]
where $J(n)\geq 0$, and goes to zero sufficiently fast, 
$0<\alpha<1$ and $d_{\mathbb{H}}$ stands for the Hausdorff distance.
For each inverse temperature $\beta>0$ we consider the equilibrium measure 
$\mu_{\beta f}$. 

\begin{conjecture}
At very low temperatures $(\beta\gg 1)$ the typical 
configuration should be an infinite collection of paths which are closed to 
each other and also close to the origin (this last information comes from 
the dependence of $\mu_{f}$ on the \textit{a priori} measure $p$).
On the other hand, at very high temperatures $(0<\beta\ll 1)$ 
a typical configuration for $\mu_{\beta}$ should be 
similar to an infinite collection of independent ``diffusive'' paths. 
\end{conjecture}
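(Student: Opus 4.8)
The plan is to analyze the two temperature extremes by different mechanisms: a perturbative comparison with the \textit{a priori} measure for the high-temperature claim, and the zero-temperature (ground-state) analysis of Subsection~\ref{subsec:Finite Entropy Ground-States and Maximizing Measures} for the low-temperature claim. Throughout, one first has to make ``typical configuration'' precise, which I would do by reading the conjecture as a statement about concentration of the equilibrium measure $\mu_{\beta f}=h_{\beta f}\,\nu_{\beta f}$ (supplied by Theorem~\ref{teo-equi-measure-f-holder}) for the observables $(\gamma_1,\gamma_2,\ldots)\mapsto d_{\mathbb{H}}(\gamma_1,\gamma_n)$ and $\gamma_1\mapsto\|\gamma_1\|$.

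For the high-temperature regime $0<\beta\ll 1$, I would first note that $\beta f\to 0$ in $\mathrm{Hol}(\alpha)$ as $\beta\to 0$, since $f$ is bounded and Hölder, so that $\mathscr{L}_{\beta f}\to\mathscr{L}_{0}$ in operator norm on $\mathrm{Hol}(\alpha)$. The operator $\mathscr{L}_0$ has leading eigenvalue $\lambda_0=1$, eigenfunction $h_0=1$ and eigenmeasure $\nu_0=p^{\mathbb{N}}$, the infinite product of the \textit{a priori} law. Using the uniform spectral gap of Theorem~\ref{teo-hol-const-corr}, the eigendata $(\lambda_{\beta f},h_{\beta f},\nu_{\beta f})$ depend continuously on $\beta$ near $0$, whence $\mu_{\beta f}\to p^{\mathbb{N}}$ weakly. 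Under $p^{\mathbb{N}}$ the coordinate paths $\gamma_1,\gamma_2,\ldots$ are independent and each is distributed according to the free law $p$; this is precisely the ``infinite collection of independent diffusive paths'' of the conjecture, with the diffusive label referring to the Gaussian construction of $p$ and to the scaling limit discussed above.

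For the low-temperature regime $\beta\gg 1$, I would invoke the ground-state theorem: any cluster point $\mu_\infty$ of $(\mu_{\beta f})_{\beta}$ of finite variational entropy is a maximizing measure for $f$. Because $J(n)\geq 0$ and each summand $d_{\mathbb{H}}^{\alpha}(\gamma_1,\gamma_n)/(1+d_{\mathbb{H}}^{\alpha}(\gamma_1,\gamma_n))$ is non-negative, one has $f\leq 0$ with $f(\gamma_1,\gamma_2,\ldots)=0$ exactly when $d_{\mathbb{H}}(\gamma_1,\gamma_n)=0$ for every $n$ with $J(n)>0$, i.e.\ on the ``collapsed'' set where all paths coincide. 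Hence $m(f)=0$ and every maximizing measure is carried by this set, which yields the statement that typical paths are close to one another. The ``close to the origin'' assertion I would extract from the residual influence of $p$: among collapsed configurations the eigenmeasure still carries the Gaussian weight of the \textit{a priori} measure, which is largest for paths passing through the origin, so the conditional law of $\gamma_1$ concentrates there as $\beta\to\infty$.

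The main obstacle will be the low-temperature part, where energy and entropy compete on a highly degenerate maximizer set: the maximizers of $f$ form an uncountable family of collapsed configurations, so selecting the limit requires a second-order, Laplace-type expansion of $\mathscr{L}_{\beta f}^n$ as $\beta\to\infty$ to show that the Gaussian weight pins the collapsed path near the origin rather than letting it spread. Controlling the tail of $J(n)$ uniformly in $\beta$, and ensuring that the cluster point has finite variational entropy so that the ground-state theorem applies, are the remaining technical hurdles I expect to dominate the argument.
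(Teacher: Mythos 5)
This statement is one of the two \emph{conjectures} of Section \ref{sec-applications}: the paper offers no proof of it (the authors explicitly present it as an open, heuristic claim), so there is no argument of theirs to compare yours against; your proposal must therefore be judged as an attempt to settle an open problem, and as such it has a genuine gap in its central (low-temperature) step. You claim that $f\leq 0$ with equality exactly on the collapsed set of configurations $(\gamma,\gamma,\gamma,\ldots)$, hence that $m(f)=0$ and every maximizing measure is carried by that set. But the paper's notion of maximizing measure takes the supremum only over $\nu\in\mathscr{M}_\sigma^a(X)$ with $\mathrm{h}^{\mathrm{v}}(\nu)>-\infty$, and every shift-invariant measure carried by the collapsed set has $\mathrm{h}^{\mathrm{v}}=-\infty$: collapsed configurations are fixed points of $\sigma$, and the paper recalls (remark to Proposition 5 of the reference [LMMS15] cited in Section \ref{sec-pressure-entropy}) that for uncountable alphabets the entropy of a Dirac measure on a periodic orbit is not finite. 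Concretely, since the \textit{a priori} measure $p$ on $\Upsilon$ is non-atomic, one can build normalized H\"older potentials $g$ that are arbitrarily large on a small neighborhood of the diagonal while keeping $\mathscr{L}_g 1=1$, and plugging these into $\mathrm{h}^{\mathrm{v}}(\mu)=\inf_g -\langle\mu,g\rangle+\log\lambda_g$ forces $\mathrm{h}^{\mathrm{v}}(\mu)=-\infty$ for any $\mu$ supported on the diagonal. Consequently the zero set of $f$ carries no admissible measure, you cannot conclude $m(f)=0$, and the very hypothesis of the ground-state theorem of Subsection \ref{subsec:Finite Entropy Ground-States and Maximizing Measures} --- existence of a cluster point of $(\mu_{\beta f})_{\beta>0}$ with finite entropy --- is exactly what is in doubt for this potential: the paper itself warns that ground-state entropy can go to $-\infty$ for uncountable spin spaces. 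So the entropy--energy competition is not a ``remaining technical hurdle''; it invalidates the step your low-temperature conclusion rests on.

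The high-temperature half of your proposal is more promising: perturbing around $\mathscr{L}_0$, whose eigendata are $\lambda_0=1$, $h_0=1$, $\nu_0=p^{\mathbb{N}}$, and arguing $\mu_{\beta f}\to p^{\mathbb{N}}$ weakly as $\beta\to 0$ is a sensible route to make ``independent paths'' precise. But even here a proof is missing: continuity of $(\lambda_{\beta f},h_{\beta f},\nu_{\beta f})$ in $\beta$ is not established in the paper's non-compact setting (the paper only records differentiability results for compact $X$), and it would require a spectral gap uniform in $\beta$ near $0$ together with an argument that weak convergence of the equilibrium states translates into the conjecture's statement about typical configurations at small but \emph{fixed} $\beta$. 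In short: neither half is proved, and the low-temperature half as written would fail.
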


The results of the previous section also allow us to construct 
a Markov process that can be used to describe the time evolution 
of this infinite interacting random path process.
Given a bounded H\"older potential $f$ we consider a normalized 
potential $\bar{f}$ cohomologous to $f$ and 
the following Markov pre-generator  
$T:C(\Upsilon^{\mathbb{N}},\mathbb{R})\to C(\Upsilon^{\mathbb{N}},\mathbb{R})$
given by 
\[
T:\mathscr{L}_{\bar{f}} - I.
\]
Clearly, this is actually a Markov generator since $\mathscr{L}_{\bar{f}}$ is 
bounded and everywhere defined operator. Therefore, we can 
apply the Hille-Yosida Theorem to construct a Markov semigroup 
$\{S(t):t\geq 0\}$ given by 
\[
S(t)(\varphi) = \lim_{n\to\infty} (I-(t/n)T)^{-n}
\varphi, \quad\forall \varphi\in C_{b}(\Upsilon^{\mathbb{N}},\mathbb{R})
\]
which is a diffusion in infinite dimension obtained 
from a potential which is not necessarily of finite-body type interaction.

Analogous considerations apply to the potential $\beta f$
so the semigroup associated to this potential 
should be ergodic as long as $J(n)$ decays to 
zero exponentially fast and $\beta$ is sufficiently small. 
Therefore for any choice of $\nu$ (countably additive probability measure), 
we have $S(t)^*(\nu)\rightharpoonup \mu_{\beta f}$.
This observation actually follows from the 
famous $(M-\varepsilon)$ theorem, see \cite{MR2108619}.

\begin{conjecture}
As long as the Ruelle operator has the spectral gap property
and the potential $f$ has continuous partial derivatives, intuitively, 
one would expect that 
the scaling limit (in the sense of Donsker theorem) of the 
infinite-dimensional Markov process associated to this semigroup 
is a formal solution of the
infinitely dimensional stochastic differential equation 
\[
dX^n_t = dB^n_t - \langle e_n, \nabla f(\sigma^{n}(X^1_t,X^2_t,\ldots))\rangle\, dt
\]
\end{conjecture}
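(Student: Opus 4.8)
The plan is to realize this scaling limit through the martingale-problem characterization of Markov processes (in the spirit of Ethier--Kurtz and Stroock--Varadhan), adapted to the infinite-dimensional state space $\Upsilon^{\mathbb{N}}$. First I would fix a diffusive rescaling: replace the \textit{a priori} Gaussian measure $p$ by a version $p_\varepsilon$ with covariance of order $\varepsilon^2$ and accelerate time by $\varepsilon^{-2}$, so that the associated semigroups $\{S_\varepsilon(t)\}$ have generators $T_\varepsilon = \varepsilon^{-2}(\mathscr{L}^\varepsilon_{\bar{f}} - I)$, where $\mathscr{L}^\varepsilon_{\bar{f}}$ denotes the Ruelle operator built from $p_\varepsilon$. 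The central step is then a generator expansion: applying $T_\varepsilon$ to a smooth cylinder function $\varphi$ depending on finitely many coordinates and Taylor-expanding $e^{\bar{f}(ax)}\varphi(ax)$ to second order in the Gaussian variable, the zeroth-order contribution cancels the $-I$ (using $\mathscr{L}^\varepsilon_{\bar{f}}1 = 1$), the first-order term pairs the gradient of the weight $e^{\bar{f}}$ against the centered Gaussian and yields, after integration, precisely the drift $-\langle e_n, \nabla f(\sigma^n(\cdot))\rangle$, while the second-order term produces $\tfrac{1}{2}\partial_n^2$ from the unit covariance of each coordinate. One thus expects convergence on a core of cylinder functions to the limiting generator
\[
\mathcal{A}\varphi \;\equiv\; \tfrac{1}{2}\sum_{n} \partial_n^2 \varphi \;-\; \sum_{n} \big\langle e_n, \nabla f(\sigma^n(\cdot))\big\rangle\, \partial_n\varphi,
\]
which is exactly the generator of the SDE in the statement.

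With generator convergence established on a core, I would next prove tightness of the laws of $\{X^\varepsilon_{\cdot}\}$ in the Skorokhod space $D([0,\infty),\mathcal{S})$ for a suitable weighted sequence space $\mathcal{S}\subset(\mathbb{R}^d)^{\mathbb{N}}$; here the spectral-gap hypothesis on $\mathscr{L}_{\bar{f}}$ enters decisively, supplying the uniform exponential mixing and the moment estimates needed to verify an Aldous-type oscillation criterion and to control the tail behaviour of the infinitely many coordinates. Combining tightness with the generator convergence, every subsequential limit is a solution of the martingale problem for $\mathcal{A}$; assuming this martingale problem is well-posed, the limit is unique and its law coincides with that of the solution of the stated infinite-dimensional SDE, which completes the identification.

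The principal obstacle --- and the reason the assertion is stated only as a conjecture --- is twofold. First, well-posedness of the limiting equation $dX^n_t = dB^n_t - \langle e_n, \nabla f(\sigma^n X_t)\rangle\, dt$ is itself nontrivial, since its drift is an infinite-range, nonlinear functional of the whole configuration; one must impose quantitative decay on the interaction (such as exponential decay of the couplings $J(n)$) together with a one-sided Lipschitz or dissipativity bound on $\nabla f$ to secure existence and uniqueness. Second, and more delicate, the generator expansion must be controlled uniformly over the infinitely many coordinates: the Taylor remainders involve $\nabla^2\bar{f}$ contracted against the Gaussian increment, and showing that they vanish after rescaling requires exactly the continuity of the partial derivatives assumed in the hypothesis, together with summability of the interaction. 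Making the informal ``scaling limit'' precise --- in particular identifying the correct topology on path space in which Donsker's invariance principle upgrades to convergence toward the infinite-dimensional diffusion --- is where the main analytic difficulty resides, and is what currently separates this heuristic from a theorem.
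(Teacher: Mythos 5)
The statement you are addressing is presented in the paper as a \emph{conjecture}: the authors give no proof, and say explicitly in the concluding remarks that they lack a rigorous argument for the surrounding claims. So the only question is whether your outline closes the gap, and it does not --- as you yourself concede in your final paragraph. Beyond the steps you flag as open (well-posedness of the limiting martingale problem, tightness, control of the Taylor remainders), there is a structural flaw in your central step, the generator expansion. The Markov process generated by $T=\mathscr{L}_{\bar f}-I$ does not make small coordinate-wise moves: a jump takes the configuration $x=(x_1,x_2,\ldots)$ to $ax=(a,x_1,x_2,\ldots)$, i.e.\ it prepends a fresh coordinate and shifts every other coordinate by one place. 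Consequently $\varphi(ax)$ is \emph{not} a small perturbation of $\varphi(x)$, no matter how small the covariance of your rescaled a priori measure $p_\varepsilon$ is: $d_X(ax,x)$ is of order one unless $x$ is close to a constant (shift-fixed) sequence. Therefore the zeroth-order term of your expansion does not cancel the $-I$; instead $\varepsilon^{-2}(\mathscr{L}^{\varepsilon}_{\bar f}\varphi-\varphi)$ diverges on cylinder functions, and the asserted limiting generator $\mathcal{A}\varphi=\tfrac12\sum_n\partial_n^2\varphi-\sum_n\langle e_n,\nabla f(\sigma^n\cdot)\rangle\,\partial_n\varphi$ cannot be reached along this route. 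Any viable approach must first explain how the shift-type jump dynamics is converted into coordinate-wise diffusive motion --- for instance through the natural-extension correspondence $\Phi_i=y_{-i}$ used in the paper's Markov-chain section, or by modifying the dynamics itself --- and that conversion is precisely the conceptual content of the conjecture, not a technicality.

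A second, smaller mismatch: the semigroup is built from the \emph{normalized} potential $\bar f=f+\log h-\log h\circ\sigma-\log\lambda_f$, so even if a Taylor expansion of $e^{\bar f(ax)}$ were available, the first-order term would produce a drift involving $\nabla\bar f$, hence extra contributions from the eigenfunction $h$, rather than the $\nabla f$ appearing in the conjectured SDE; you would still need to show that these cohomological corrections vanish (or telescope away) in the scaling limit. In sum, your text is a sensible Stroock--Varadhan-style research program, but its key analytic step fails as stated and the remaining steps are hypotheses rather than proofs, so the statement remains exactly where the paper leaves it: a conjecture.
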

This stochastic differential equation has its origin in the
works of Lang \cite{MR0431435,MR0455161}, where the potential $f$ 
has either one or two-body interactions, satisfies  
some symmetry and smoothness condition.
This equations are also studied using ideas of 
DLR-Gibbsian equilibrium states in \cite{MR885128,MR3059192,MR525311,MR1376344}.

\section{Concluding Remarks}
\label{sec-concluding-remarks}

\subsubsection*{Compact alphabets.} As mentioned early, if $X$ 
is compact, then it follows from the Alexandroff Theorem 
\cite[III.5.13]{MR0117523} that $rba(X)$ is equal to the set of 
all signed and finite Borel regular
countably additive measures. Therefore, in this case the Thermodynamic Formalism 
developed here is an extension of the classical one for finite 
(\cite{MR1793194,MR1085356,MR0234697,MR0390180,MR0466493}) and compact alphabets 
(\cite{MR2864625,MR3538412,MR3377291,MR3194082}).

\subsubsection*{Shift-invariant subspaces.}  If $Y\subset X$ 
is a complete and shift-invariant subset, then 
the definition of pressure and entropy can be introduced analogously as we did
for the full shift. 
Moreover, since our main results regarding the existence of equilibrium states 
are build upon the general theory of convex analysis, they 
generalize immediately for such subshifts.

\subsubsection*{Spectral radius.} 
By using similar argument as in \cite{CER17}, we can prove the following
result. For any $f\in C_{b}(X,\mathbb{R})$, there exists at least one finitely additive
probability measure $\nu_{f}$ such that 
\[
\mathscr{L}_{f}^{*}\nu_{f} = \lambda\nu_{f},
\]
where $0<\lambda\leq \rho(\mathscr{L}_{f})$. 
At this moment we do not know what are the necessary and sufficient 
conditions to ensure that $\nu_{f}$ is countably additive. 
It also seems that there $\lambda$ may not 
be the spectral radius of the Ruelle operator acting on $C_{b}(X,\mathbb{R})$.

\subsubsection*{Uniqueness.} As far as we know, the first paper proving the uniqueness of  
equilibrium states for H\"older potentials in an uncountable alphabet setting is \cite{ACR17}.
The techniques employed there are no longer applicable here, because they are
strongly dependent on the denseness of the H\"older potentials 
in the space $C_b(X,\mathbb{R})$, which may not be true if $X$ is not compact.
As mentioned before, the G\^ateaux differentiability of the pressure
would imply this result, but to the best of our knowledge none
of the known techniques can be adapted to work in the generality
considered here. 

\subsubsection*{Stone-\v{C}ech compactification.}
Due to Knowles correspondence theory developed in \cite{MR0204602}, 
there is no technical advantage in reconstructing our theory by regarding 
$X$ as a subset of its Stone-\v{C}ech compactification $\beta X$. 
To be more precise: the question whether an equilibrium state $\mu_{f}$, for 
a general potential $f\in C_{b}(X,\mathbb{R})$, is a countably
additive measure is simply translated to a question on the 
support of a corresponding measure.
For example, as an application of Theorem 2.1 of \cite{MR0204602}, it
follows that the Yosida-Hewitt decomposition of the equilibrium 
state $\mu_f$ has no 
purely finitely additive part if and only if $\overline{\mu_{f}}(Z)=0$
for every zero-set $Z$ in $\beta X$ disjoint from $X$,
see \cite{MR0204602} for more details and the definition of 
$\overline{\mu_{f}}$. 

\subsubsection*{Phase transitions.}
If we have phase transition (in the sense of multiple equilibrium states at the same temperature)
for a normalized potential $\beta f$, 
then the semigroup $\{S(t):t\geq 0\}$ 
generated by the operator $T=(\mathscr{L}_{\beta f}-I)$ is not ergodic in the
sense of \cite{MR2108619}. 
We believe that distinct cluster points in the weak-$*$-topology
of $S(t)^*(\nu)$, when $t$ tends to infinity, 
for suitable choices of $\nu$, 
will generate distinct solutions
for the infinitely dimensional stochastic differential equation 
$
dX^n_t = dB^n_t -\langle e_n, \nabla f(\sigma^{n}(X^1_t,X^2_t,\ldots))\rangle\, dt.
$
Although we do not have a rigorous argument that supports this claim, 
it seems to be at least consistent with what is known about
both problems for H\"older potentials.
Furthermore, a rigorous proof of such relation would have the potential 
of creating a beautiful bridge between Thermodynamic Formalism 
and the theory of infinite-dimensional diffusions. 


\section{Acknowledgments}
E. Silva would like to thank Jochen Gl\"uck for provide a proof of Theorem \ref{Functional}. 
This study was financed in part by the Coordena\c c\~ao de Aperfei\c coamento 
de Pessoal de N\'ivel Superior - Brasil (CAPES) - Finance Code 001. 
L. Cioletti and M. Stadlbauer would like to 
acknowledge financial support by CNPq through projects 
PQ 310883/2015-6, 310818/2015-0 and Universal 426814/2016-9, 
whereas L. Cioletti and E. Silva would like to thank FAP-DF for  financial support.


\newcommand{\etalchar}[1]{$^{#1}$}

\end{document}